\newcommand{\imp}{\rightarrow}
\newcommand{\Nb}{\mathbb{N}}
\newcommand{\Psf}{\mathsf{P}}
\newcommand{\Ical}{\mathcal{I}}
\newcommand{\Jcal}{\mathcal{J}}
\newcommand{\Pcal}{\mathcal{P}}
\newcommand{\uh}{{\upharpoonright}}
\newcommand{\uhr}{{\upharpoonright}}
\renewcommand{\setminus}{\smallsetminus}
\newcommand{\dbf}{\mathbf{d}}
\newcommand{\s}[1]{\ensuremath{\sf{#1}}}
\newcommand{\ovw}[2]{\s{OVW}(#1,#2)}
\newcommand{\vw}[2]{\s{VW}(#1,#2)}
\DeclareMathOperator{\fut}{\s{FUT}}
\DeclareMathOperator{\wfut}{\s{wFUT}}
\DeclareMathOperator{\rca}{\s{RCA}_0}
\DeclareMathOperator{\piooca}{\Pi^1_1\s{CA}}
\DeclareMathOperator{\aca}{\s{ACA}}
\DeclareMathOperator{\wkl}{\s{WKL}}
\DeclareMathOperator{\drt}{\s{DRT}}
\DeclareMathOperator{\coh}{\s{COH}}
\def\mcal{\mathcal}
\def\t{\tilde}
\newtheorem{theorem}{Theorem}[section]
\newtheorem{lemma}[theorem]{Lemma}
\newtheorem{claim}[theorem]{Claim}
\newtheorem{corollary}[theorem]{Corollary}
\theoremstyle{definition}
\newtheorem{definition}[theorem]{Definition}
\newtheorem{statement}[theorem]{Statement}
\theoremstyle{remark}
\newtheorem {question}[theorem]{Question}
\numberwithin{equation}{section}
\newtheoremstyle{noparens}%
  {}{}%
{}{}%
{\bfseries}{.}%
{ }%
{\thmname{#1}\thmnumber{ #2}\thmnote{ #3}}
\theoremstyle{noparens}
\newtheorem*{question*}{Question}
\newtheorem*{theorem*}{Theorem}
\title{A computable analysis of variable words theorems}
\author{Lu Liu}
\address{Department of Mathematics\\
Central South University\\
ChangSha 410083\\
People’s Republic of China}
\email{g.jiayi.liu@gmail.com}
\author{Benoit Monin}
\address{LACL, Département d'Informatique \\
Faculté des Sciences et Technologie \\
61 avenue du Général de Gaulle \\
94010 Créteil Cedex}
\email{benoit.monin@computability.fr}
\author{Ludovic Patey}
\address{Institut Camille Jordan\\
Université Claude Bernard Lyon 1\\
43 boulevard du 11 novembre 1918\\
F-69622 Villeurbanne Cedex}
\email{ludovic.patey@computability.fr}
\thanks{}
\def\t{\tilde}
\def\mcal{\mathcal}
\begin{document}

\begin{abstract}
The Carlson-Simpson lemma is a combinatorial statement occurring in the proof of the Dual Ramsey theorem. Formulated in terms of variable words, it informally asserts that given any finite coloring of the strings, there is an infinite sequence with infinitely many variables such that for every valuation, some specific set of initial segments is homogeneous. Friedman, Simpson, and Montalban asked about its reverse mathematical strength. We study the computability-theoretic properties and the reverse mathematics of this statement, and relate it to the finite union theorem. In particular, we prove the Ordered Variable word for binary strings in $\aca_0$. 
\end{abstract}

\maketitle

\section{Introduction}

Let $(\Nb)^k$ and $(\Nb)^\infty$ denote the set of partitions of $\Nb$ into exactly $k$ and infinitely many non-empty pieces, respectively. For $X \in (\Nb)^\infty$, $(X)^k$ is the set of all $Y \in (\Nb)^k$ which are coarser than $X$.

\begin{statement}[Dual Ramsey theorem]
	$\drt^k$ is the statement ``If $(\Nb)^k$ is colored with finitely many Borel colors, then there is some $X \in (\Nb)^\infty$ such that $(X)^k$ is monochromatic''.
\end{statement}

The Dual Ramsey theorem was proven by Carlson and Simpson~\cite{Carlson1984dual},
and studied from a reverse mathematical viewpoint by Slaman~\cite{Slaman1997note}, Miller and Solomon~\cite{Miller2004Effectiveness} and Dzhafarov et al.~\cite{Dzhafarov2017Effectiveness}.
In this paper, we shall focus on a combinatorial lemma used by Carlson and Simpson to prove the Dual Ramsey theorem. This lemma can be formulated in terms of \emph{variable words}.

\begin{definition}[Variable word]
An \emph{infinite variable word} on a finite alphabet $A$
is an $\omega$-sequence $W$ of elements of $A \cup \{x_i : i \in \Nb\}$
in which all variables occur at least once, and finitely often. Moreover, the first occurrence of $x_i$ comes before the first occurrence of $x_{i+1}$.
A \emph{finite variable word} is an initial segment of an infinite variable word. A finite or infinite variable word is \emph{ordered} if moreover all occurences of $x_i$ come before any occurrence of $x_{i+1}$.
Given $\bar a = a_0a_1 \dots a_{k-1} \in A^{<\omega}$, we let $W(\bar a)$
denote the finite $A$-string obtained by replacing $x_i$ with $a_i$ in $W$ and then truncating the result
just before the first occcurence of $x_k$.
\end{definition}

\begin{statement}[Variable word theorem]
$\vw{n}{r}$ is the statement ``If $A^{<\omega}$ is colored with $r$ colors for some alphabet $A$
of cardinality $n$, there exists
an infinite variable word $W$ such that $\{W(\bar a) : \bar a \in A^{<\omega}\}$ is monochromatic.
$\ovw{n}{r}$ is the same statement as $\vw{n}{r}$ but for ordered variable words.
\end{statement}

In this paper, we study the computability-theoretic properties of the variable word theorems using the framework of reverse mathematics.\footnote{The authors thank Damir Dzhafarov, Stephen Flood, Reed Solomon and Linda Brown Westrick for bringing the attention of the authors to the Carlson-Simpson lemma, and for  numerous discussions. The authors are also thankful to Denis Hirschfeldt and Barbara Csima for showing them how to use Lovasz Local Lemma to prove lower bounds to combinatorial theorems.}

\subsection{Reverse mathematics}
Reverse mathematics is a vast foundational program aiming to determine the optimal axioms to prove ordinary theorems. It uses the framework of second-order arithmetic, with a base theory $\rca$ consisting of the axioms of Robinson arithmetic, the $\Sigma^0_1$ induction scheme and the $\Delta^0_1$ comprehension scheme. The system $\rca$ arguably captures \emph{computable mathematics}. Starting from a proof-theoretic perspective, modern reverse mathematics tends to be seen as a framework to analyse the computability-theoretic features of theorems. Among the distinguished statements, let us mention weak K\"onig's lemma ($\wkl$), asserting that every infinite binary tree has an infinite path, the arithmetic comprehension axiom ($\aca$), and the $\Pi^1_1$ comprehension axiom ($\piooca$), consisting of the comprehension scheme restricted to arithmetic and $\Pi^1_1$ formulas, respectively. See Simpson~\cite{Simpson2009Subsystems} for reference book on classical reverse mathematics.

The statements studied within this framework are mainly of the form $(\forall X)[\Phi(X) \rightarrow (\exists Y)\Psi(X, Y)]$, where $\Phi$ and $\Psi$ are arithmetic formulas with set parameters, and can be considered as \emph{problems}. Given a statement $\Psf$ of this form,
	a set $X$ such that $\Phi(X)$ holds is an \emph{instance} of $\Psf$, and a set $Y$ such that $\Psi(X, Y)$ holds is a \emph{solution} to the $\Psf$-instance $X$. In this paper, we shall consider exclusively statements of this kind.

Friedman and Simpson~\cite{Friedman2000Issues}, and later Montalban~\cite{Montalban2011Open}, asked about the reverse mathematical strength of the ordered variable word. The statement $\ovw{k}{\ell}$ is known to be provable in $\rca + \piooca$. Our main result is a direct combinatorial proof of $\ovw{2}{\ell}$ in $\rca + \aca$.

\begin{theorem}\label{thm:aca-ovw2}
	For every $\ell \geq 2$, $\rca + \aca \vdash \ovw{2}{\ell}$.
\end{theorem}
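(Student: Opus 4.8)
The plan is to reduce $\ovw{2}{\ell}$ to a statement about finding, within $\aca_0$, an infinite ordered variable word whose instantiations are homogeneous, by building the variable word one variable-block at a time using a Ramsey-type stabilization argument at each stage. Fix a coloring $c: 2^{<\omega} \to \ell$. We build an increasing sequence of finite ordered variable words $W_0 \sqsubset W_1 \sqsubset \cdots$ together with a sequence of "reservoir" strings, so that at stage $s$ we have committed a fixed prefix $p_s$ (the part of $W_s$ before the first occurrence of $x_{k_s}$, where all variables $x_0,\dots,x_{k_s-1}$ have been instantiated by any $\bar a \in 2^{k_s}$) and $c$ is constant on $\{W_s(\bar a) : \bar a \in 2^{k_s}\}$ with some pinned color. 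The key point specific to the binary alphabet $A=\{0,1\}$ is that each new variable $x_k$, once a concrete string-segment $\sigma$ is appended after it, contributes exactly two extensions $p\,0\,\sigma$ and $p\,1\,\sigma$ for each already-fixed valuation prefix $p$; this finite branching is what keeps the combinatorics tractable and is presumably why the proof goes through for $n=2$ but not obviously for larger alphabets.

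First I would set up the stage machinery: given the current committed prefix and the finite set $D_s = \{W_s(\bar a) : \bar a \in 2^{k_s}\}$ of strings all receiving color $i_s$, I want to choose a string segment $\tau$ and a value for the next variable $x_{k_s}$ so that appending "$x_{k_s}\,\tau$" keeps the color constant on the enlarged set $D_{s+1}$. The natural tool is to apply an infinitary pigeonhole / $\Pi^0_1$-class argument over the tree of possible continuations: the set of candidate continuations $\tau$ that are "good" (keep all $2^{k_{s+1}}$ new instantiations monochromatic in color $i_s$, or some fixed target color) forms an arithmetically definable object, and one shows by a counting argument that good continuations exist cofinally. Here is where $\aca$ enters: the relevant "large set of good continuations persists forever" predicate is $\Pi^0_1$ (or arithmetic) in $c$, so $\aca_0$ can form the set of stages and strings witnessing it, and can run the recursive construction, since each stage only requires deciding an arithmetic question about $c$. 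I would phrase the invariant as: there is an infinite binary-branching subtree $T_s$ of the relevant continuation space, arithmetically definable, all of whose branches extend the commitments made so far; passing from $T_s$ to $T_{s+1}$ uses $\aca$ to pick an infinite monochromatic sub-object.

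The main obstacle I anticipate is the \emph{uniformity across all valuations}: unlike the Hales–Jewett / Carlson–Simpson setting where one fixes one valuation at a time, here $\ovw{2}{\ell}$ demands a single $W$ that works for \emph{every} $\bar a$ simultaneously, so the monochromaticity conditions for different valuations must be reconciled at each stage. The trick will be to exploit that at stage $s$ there are only finitely many ($2^{k_s}$) valuations to worry about, and to use a product/iterated pigeonhole: thin the continuation tree once for each of the finitely many valuations in turn, each thinning preserving infinitely many branches, so that after finitely many thinnings a common good continuation is found. Making this thinning argument go through inside $\aca_0$ — rather than needing $\piooca$ as in the known proof — is the crux, and it relies on the fact that each individual thinning step is governed by an arithmetic (indeed $\Sigma^0_2$ or $\Pi^0_2$) condition on $c$, which $\aca_0$ handles via iterated Turing jumps, together with $\Sigma^0_1$-induction to carry the finite iteration over the $2^{k_s}$ valuations. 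Once the construction is shown to be arithmetic and to produce an infinite ordered variable word with the homogeneity property, the theorem follows; the verification that $W = \bigcup_s W_s$ is a genuine infinite ordered variable word (all variables occur, in order, finitely often) is routine from the construction.
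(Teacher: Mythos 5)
Your proposal takes the same broad route as the paper's Section~3 (a direct, stage-by-stage construction rather than the detour through the Finite Union Theorem), but it elides the central difficulty in a way that leaves a genuine gap, and the paper's actual argument is built precisely around that difficulty.

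The gap is in the claim that ``good continuations exist cofinally.'' Concretely: you fix a color $i_s$ at stage $s$ and then assert that one can find a next variable block $x_{k_s}\tau$ so that all $2^{k_s+1}$ new instantiations also get color $i_s$. There is no counting argument that gives this. It can simply fail: the coloring $c$ may be designed so that no extension $\tau$ keeps color $i_s$ on even one of the two branches $W_s(\bar a)\,0$ or $W_s(\bar a)\,1$ beyond some finite depth, let alone on all $2^{k_s}$ of them simultaneously. ``Thinning the continuation tree once per valuation'' does not resolve this, because each thinning step is exactly the step whose existence is in question; after the first thinning there is no argument that infinitely many branches survive, and the iterated version inherits the same problem. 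Your own identification of ``uniformity across valuations'' as the crux is correct, but the product-pigeonhole sketch does not actually discharge it.

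The paper gets around this with three ideas that are absent from your proposal. First, it never commits to a target color during the construction: Lemma~\ref{ovwclaim1} and Lemma~\ref{ovwprop1} instead build, relative to $c'$, an extension $\t\rho$ together with a \emph{bounded} set of candidate positions $P$ (of size $<\ell^{\ell+2}$) such that \emph{for every} continuation $\sigma\succeq\t\rho$ one can choose, after the fact, subsets $P'<\t P\subseteq P$ giving agreement $\t c(\alpha)=\t c(\alpha_{\t P})=\t c(\alpha\uhr\min\t P)$. Second, the existence of this $P$ is obtained by a greedy search \emph{with resets}: if no extension realizing some color $j$ exists, $j$ is discarded from the live color set $L$ and the search restarts; this is what bounds the process and is what your ``counting argument'' would actually have to be. Third, the final choice of which positions become variables and which single color is achieved is deferred to the end: one assembles a finitely branching $\emptyset'$-computable tree of consistent tuples $\langle P'_i,\t P_i,j_i\rangle$, and a PA degree over $\emptyset'$ picks an infinite path, which only then determines the variable positions $\t P_i$, the background bits, and the homogeneous color $j_0$. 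Because you try to build the variable word greedily and pin its color as you go, rather than building this ``all continuations are covered'' infrastructure and deferring the commitment, your construction can get stuck, and the argument as sketched does not establish $\ovw{2}{\ell}$ in $\rca+\aca$.
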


On the lower bound hand, Miller and Solomon~\cite{Miller2004Effectiveness} constructed a computable instance $c$ of $\ovw{2}{2}$ with no $\Delta^0_2$ solution, and deduced that $\rca + \wkl$ does not prove $\vw{2}{2}$. Indeed, seeing the instance $c$ of $\ovw{2}{2}$ as an instance of $\vw{2}{2}$, and noticing that the jump of a solution to $\vw{2}{2}$ gives a solution to $\ovw{2}{2}$, one can deduce that $c$ has no low $\vw{2}{2}$-solution. In this paper, we improve their lower bound by constructing a computable instance of $\ovw{2}{2}$ whose solutions are of DNC degree relative to $\emptyset'$.

\subsection{Organization of the paper}
In section~\ref{sect:ht-ovw}, we shall give a simple proof of the ordered variable word for binary strings ($\ovw{2}{\ell}$) using the finite union theorem. Then, in section~\ref{sect:ovw-aca}, we provide a direct combinatorial proof of the same statement over $\rca + \aca$.
Finally, in section~\ref{sect:ovw-lower-bounds}, we give a new lower bound on the strength of $\ovw{2}{\ell}$ using a computable version of Lovasz Local Lemma.

\subsection{Notation}

Given two sets $A$ and $B$, we write $A < B$ for the formula $(\forall x \in A)(\forall y \in B) x < y$.
Given a set $A$, we write $A^{<\omega}$ for the set of finite $A$-valued strings. In particular, $2^{<\omega}$ is the set of binary strings. We denote by $\Pcal_{fin}(\Nb)$ the collection of finite \emph{non-empty} subsets of $\Nb$. Given two strings $\sigma, \tau \in A^{<\omega}$, $\sigma*\tau$ denotes their concatenation. We may also write $\sigma\tau$ when there is no ambiguity. Given a string or a sequence $X$ and some $n \in \omega$, we write $X \uhr n$ for the initial segment of $X$ of length $n$. In particular, $X \uhr 0$ is the empty string, written $\varepsilon$.

\section{A simple proof of the Ordered Variable Word theorem from the Finite Union Theorem}\label{sect:ht-ovw}

Simpson first noted a relation between Hindman's theorem and the Carlson-Simpson lemma~\cite{Carlson1984dual}. In this section, we give a formal counterpart to his observation by giving a simple proof of $\ovw{2}{\ell}$ using the Finite Union Theorem, a statement known to be equivalent to Hindman's theorem. A variation of the proof below was used by Dzhafarov et al.~\cite{Dzhafarov2017Effectiveness} to give an upper bound to the Open Dual Ramsey's theorem. A direct combinatorial proof of $\ovw{2}{\ell}$ in $\rca + \aca$ will be given in the next section. 

\begin{definition}
An \emph{IP collection} is an infinite collection of finite sets $\Ical \subseteq \Pcal_{fin}(\Nb)$
which is closed under \emph{non-empty} finite unions and contains an infinite subcollection
of pairwise disjoint sets.
\end{definition}

Note that any IP collection $\Ical$ necessarily contains an infinite $\Ical$-computable sequence $S_0 < S_1 < \dots$.

\begin{statement}[Finite union theorem]
For every $\ell \in \Nb$, $\fut_\ell$ is the statement ``For every coloring $c : \Pcal_{fin}(\Nb) \to \ell$,
there is a monochromatic IP collection''. $\wfut^2_\ell$ is the statement ``For every coloring $c : \Pcal_{fin}(\Nb) \times \Nb \to \ell$,
there is an IP collection $\Ical$ and a color $i < \ell$ such that $c(S, \min T) = i$ for every $S < T \in \Ical$.''
\end{statement}

\begin{theorem}
$\rca \vdash \forall \ell(\fut_\ell \to \wfut^2_\ell)$.
\end{theorem}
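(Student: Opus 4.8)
The idea is to reduce the two-dimensional coloring $c : \Pcal_{fin}(\Nb) \times \Nb \to \ell$ appearing in $\wfut^2_\ell$ to an ordinary coloring of $\Pcal_{fin}(\Nb)$ to which we can apply $\fut_\ell$. The natural move is to build the auxiliary coloring along the canonical increasing sequence $S_0 < S_1 < \cdots$ extracted from an IP collection, but since we don't yet have the IP collection, we instead work directly: fix the $\Ical$-computable increasing sequence in our hands only after applying $\fut_\ell$. So the first step is to define a coloring $d : \Pcal_{fin}(\Nb) \to \ell$ by setting $d(F) = c(F \setminus \{\max F\},\ \max F)$ when $|F| \geq 2$, that is, we split $F$ into its top element and the rest, and read off the color $c$ assigns to that pair. (When $|F|=1$ we may assign $d(F)$ an arbitrary color, say $0$; singletons will be irrelevant.) This $d$ is $c$-computable, hence exists in $\rca$.

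Next, apply $\fut_\ell$ to $d$ to obtain a monochromatic IP collection $\Ical$, say with color $i < \ell$. I then need to pass to a subcollection on which the desired property $c(S, \min T) = i$ for all $S < T$ holds. The point is this: take the canonical increasing sequence $S_0 < S_1 < \cdots$ in $\Ical$ (which exists $\Ical$-computably, as remarked in the excerpt). Given $S < T$ in $\Ical$, I want to exhibit a set in $\Ical$ whose top part is $S$ and whose maximum is $\min T$ — but that's not literally possible. The correct approach is to restrict attention to the \emph{subcollection} $\Jcal$ generated by a carefully chosen infinite pairwise-disjoint sequence inside $\Ical$: choose $T_0 < T_1 < \cdots$ pairwise disjoint elements of $\Ical$, and consider unions of them. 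For such a union $F = T_{k_0} \cup \cdots \cup T_{k_m}$ with $k_0 < \cdots < k_m$, we have $\max F = \max T_{k_m}$ and the "rest" $F \setminus \{\max F\}$ still lies in the IP collection $\Ical$ (it's a union of the $T_{k_j}$'s minus one point — this is the subtle point). To make $F \setminus \{\max F\}$ land in $\Ical$, I should instead engineer the $T_j$'s so that each $T_j$ is a singleton, i.e. extract from $\Ical$ an infinite increasing sequence of singletons $\{t_0\} < \{t_1\} < \cdots$; then for any finite $E \subseteq \{t_0, t_1, \ldots\}$ with $|E| \geq 2$, $E \setminus \{\max E\} \in \Ical$ as well (it is a non-empty finite union of the singletons $\{t_j\} \in \Ical$), so $d(E) = c(E \setminus \{\max E\}, \max E) = i$ because $E \in \Ical$ too.

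Now let $\Jcal$ be the IP collection generated by the singletons $\{t_0\}, \{t_1\}, \ldots$, i.e. all non-empty finite unions of them; this is an $\Ical$-computable (hence $c$-computable) IP collection, and $\Jcal \subseteq \Ical$. Given $S < T$ in $\Jcal$, consider $E = S \cup \{\min T\}$. Since $\min T \in \{t_0, t_1, \ldots\}$ and $S$ is a union of such singletons all below $\min T$, $E$ is again a non-empty finite union of our singletons, so $E \in \Jcal \subseteq \Ical$, $|E| \geq 2$, $\max E = \min T$, and $E \setminus \{\max E\} = S$. Therefore $c(S, \min T) = c(E \setminus \{\max E\}, \max E) = d(E) = i$, as required. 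Thus $\Jcal$ together with the color $i$ witnesses $\wfut^2_\ell$. I expect the main obstacle to be exactly the bookkeeping ensuring $E \setminus \{\max E\}$ stays inside the monochromatic IP collection; restricting to unions of singletons sidesteps it cleanly, and all the set-existence and the extraction of the increasing sequence of singletons are visibly $\Delta^0_1$ in the given data, so the whole argument goes through in $\rca$ without any appeal to induction beyond $\Sigma^0_1$.
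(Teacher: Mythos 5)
There is a genuine gap. Your argument hinges on ``extract from $\Ical$ an infinite increasing sequence of singletons $\{t_0\}<\{t_1\}<\cdots$,'' but $\fut_\ell$ gives you \emph{some} monochromatic IP collection with no control whatsoever over its structure, and that collection may contain no singletons at all. Since an IP collection is closed only upward (under finite unions), not under deletion of points, its minimal elements are effectively its generators, and these can all have size $\geq 2$. A concrete example: if $d(F)=|F|\bmod 2$, then every $d$-monochromatic IP collection must be of color $0$ and generated by even-sized sets (two odd generators would union to an even set of the wrong color), hence contains no singletons. For your coloring $d(F)=c(F\setminus\{\max F\},\max F)$, when $F$ is built from non-singleton generators $T_{j_0}\cup\cdots\cup T_{j_m}$, the set $F\setminus\{\max F\}$ is $T_{j_0}\cup\cdots\cup(T_{j_m}\setminus\{\max T_{j_m}\})$, which is neither in $\Ical$ nor of the form ``$S$ for $S\in\Jcal$''; so monochromaticity of $d$ on $\Ical$ gives you $c$-information only at pairs $(S',n)$ that do not match the pairs $(S,\min T)$ you need to control. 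The diagonal encoding simply loses the second coordinate's independence.

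This is precisely the obstruction the paper's proof is designed to circumvent, and it does so by a genuinely different route: it first applies $\coh$ (available since $\fut_\ell\Rightarrow\aca\Rightarrow\coh$ over $\rca$) to the family $R_{S,i}=\{n:f(S,n)=i\}$ to obtain a cohesive set $C$ along which $\lim_{n\in C}f(S,n)$ exists for every $S$, then pushes $\Pcal_{fin}(\Nb)$ into $C$ via a bijection $h$ and colors $S$ by this limit. Applying $\fut_\ell$ to the resulting one-variable coloring $\tilde f$ yields an IP collection on which the limit color is constant, and a routine thinning then makes the finitely many ``not yet stabilized'' values of $n=\min h[T]$ large enough. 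Cohesiveness decouples $S$ from $n$; your construction couples them rigidly inside a single set, which is why it needs singletons and cannot be repaired without some analogous stabilization step.
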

\begin{proof}
Assume $\ell \geq 2$, the other cases being trivial.
Let $f : \Pcal_{fin}(\Nb) \times \Nb \to \ell$ be an instance of $\wfut^2_\ell$.
Note that over $\rca$, $\fut_\ell \imp \aca$ and $\aca \imp \coh$.
Let $\vec{R}$ be a sequence of set defined for every $S \in \Pcal_{fin}(\Nb)$ and $i < \ell$
by $R_{S,i} = \{ n \in \Nb : f(S, n) = i \}$.
Apply $\coh$ to $\vec{R}$ to obtain an infinite $\vec{R}$-cohesive set $C$.
In particular, for every $S \in \Pcal_{fin}(\Nb)$, $\lim_{n \in C} f(S, n)$ exists.

Let $h : \omega \to C$ be a computable bijection.
Let $\tilde{f} : \Pcal_{fin}(\Nb) \to \ell$ be defined by $\tilde{f}(S) = \lim_{n \in C} f(h[S], n)$.
$\tilde{f}$ is a $\Delta^{0,f \oplus C}_2$ instance of $\fut_\ell$, so by the finite union theorem, there is an IP collection $\Ical \subseteq \Pcal_{fin}(\Nb)$.
and a color $i < \ell$ such that 
for every $S \in \Ical$, $\tilde{f}(S) = \lim_{n \in C} f(h[S], n) = i$. Note that for every $S \in \Ical$, $\min h[S] \in C$.
Therefore, by $f$-computably 
thinning-out the set $\Ical$, we obtain an IP collection $\Jcal \subseteq \Ical$
such that for every $S < T \in \Jcal$, $f(h[S], \min h[T]) = i$.
The set $\{h[S] : S \in \Jcal\}$ is a solution to $f$.
%
\end{proof}

\begin{theorem}
$\rca \vdash \forall \ell(\wfut^2_\ell \to \ovw{2}{\ell})$.
\end{theorem}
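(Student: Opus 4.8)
The plan is to encode a binary coloring of $2^{<\omega}$ into a coloring of $\Pcal_{fin}(\Nb) \times \Nb$ in such a way that a solution to $\wfut^2_\ell$ — an IP collection $\Ical$ together with a color $i$ satisfying $c(S, \min T) = i$ for all $S < T \in \Ical$ — can be decoded into an infinite ordered variable word whose valuation set is $i$-monochromatic. The key observation is that an \emph{ordered} variable word on the alphabet $\{0,1\}$ is essentially a description of how to build binary strings from finitely many "blocks": if $S_0 < S_1 < \dots$ is the canonical disjoint sequence inside an IP collection $\Ical$, one thinks of each finite \emph{non-empty union} $S = S_{j_0} \cup \dots \cup S_{j_k}$ (with $j_0 < \dots < j_k$) as coding a binary string — the bit contributed at position $m$ depending on whether $m$ lies in an "even-indexed" or "odd-indexed" part of the union, or more precisely we fix in advance, computably, an interpretation that turns the combinatorial structure of $S$ into a binary string $\sigma_S$. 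Because $\Ical$ is closed under finite unions, the set $\{\sigma_S : S \in \Ical\}$ is exactly the valuation set $\{W(\bar a) : \bar a \in 2^{<\omega}\}$ of a single infinite ordered variable word $W$ read off from the sequence $S_0 < S_1 < \dots$ and the "gaps" between consecutive $S_j$'s (the gaps are where the variables $x_j$ get placed; within a block $S_j$ the value is a fixed constant, and the variable $x_j$ governs whether block $S_j$ is "switched on").

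First I would make this encoding precise: fix a computable way of reading any finite set $F \subseteq \Nb$ as a finite variable word $V_F$ over $\{0,1\}$ — list the elements of $F$ in increasing order to delimit blocks, declare the $j$-th gap to carry variable $x_j$, and assign constants on the blocks so that plugging $a_j = 1$ "includes" block $j$ and $a_j = 0$ "omits" it. The technical content is that for $S = S_{j_0} \cup \dots \cup S_{j_k}$ with $S_0 < \dots < S_k$ the canonical disjoint subsequence of $\Ical$, the string produced by $V$ applied to the characteristic pattern of which $S_j$'s occur in $S$ depends only on $S$, and ranges over a set closed in the right way; this is where one has to check that "non-empty finite union" on the set side matches "arbitrary binary valuation" on the variable-word side, which forces a small amount of care with the empty valuation / the truncation convention in the definition of $W(\bar a)$. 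Second, I would define the instance $c : \Pcal_{fin}(\Nb) \times \Nb \to \ell$ of $\wfut^2_\ell$ from a given instance $d : 2^{<\omega} \to \ell$ of $\ovw{2}{\ell}$ by setting $c(S, n)$ to be $d$ evaluated at the binary string obtained by "closing off" the coded word of $S$ just before the block determined by $n$ — the second coordinate $n = \min T$ playing the role of the point where the next variable $x_{|S|}$ begins, i.e. it tells us where to truncate. Third, feeding $c$ to $\wfut^2_\ell$ yields $\Ical$ and $i < \ell$; I would then read off from the canonical disjoint sequence of $\Ical$ (together with the interval structure) the infinite ordered variable word $W$, and verify that every $W(\bar a)$ equals $d$ evaluated at some $c(S,\min T)$ with $S < T \in \Ical$, hence has color $i$, so $W$ is a solution to the $\ovw{2}{\ell}$-instance $d$.

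I expect the main obstacle to be getting the bookkeeping of the truncation convention exactly right: $W(\bar a)$ for $\bar a = a_0 \dots a_{k-1}$ is truncated \emph{just before the first occurrence of $x_k$}, so to realize an arbitrary such truncated string as some $c(S,\min T)$ I must ensure that for each initial segment of choices there is a matching pair $S < T$ in $\Ical$ with $S$ coding exactly the blocks $0,\dots,k-1$ (with the chosen valuation) and $\min T$ sitting in block $k$. That the IP collection contains its canonical disjoint sequence $S_0 < S_1 < \dots$ and is closed under unions gives me the $S$'s; that it is infinite gives me, for any such $S$, a further element $T \in \Ical$ with $S < T$, and refining $T$ down to a piece of the disjoint sequence places $\min T$ in the correct block — but matching "the next block after $S$" with "where $x_k$ begins in $W$" requires defining $W$ and the encoding $V$ \emph{simultaneously and compatibly}, and checking the alignment is the delicate step. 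Everything else — that the construction is carried out in $\rca$, that the coloring $c$ is $d$-computable, and that the final verification is a finite check per valuation — is routine. A secondary point to handle carefully is the case $\ell < 2$, which is trivial and can be dispatched first.
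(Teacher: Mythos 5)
Your high-level plan --- encode the $\ovw{2}{\ell}$-instance $d$ into a coloring of $\Pcal_{fin}(\Nb) \times \Nb$, apply $\wfut^2_\ell$, and read an ordered variable word off the canonical disjoint sequence $F_0 < F_1 < \cdots$ of the IP collection --- is exactly the paper's strategy, and your observation that the second coordinate should serve as the truncation point is correct. However, the encoding itself is where the proposal does not hold up. You describe $V_F$ as having ``the $j$-th gap carrying variable $x_j$'' yet with the effect of setting $a_j$ being to ``include or omit block $j$''; these two declarations contradict one another, since a valuation acts on the positions where the variable actually occurs, not on adjacent constant blocks. Your parenthetical ``even-indexed vs.\ odd-indexed part of the union'' version is not even a well-defined function of $(S,n)$: it presupposes a decomposition $S = S_{j_0}\cup\cdots\cup S_{j_k}$ coming from the disjoint sequence of $\Ical$, which does not exist yet when $c$ must be defined. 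Your closing worry --- that $W$ and the encoding must be ``defined simultaneously and compatibly'' --- is a symptom of this; the whole point is that the encoding has to be fixed once and for all, with no reference to $\Ical$.

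The paper's encoding is far cruder and therefore works: set $g(S,n) = d(\sigma)$ where $\sigma$ is the \emph{length-$n$ characteristic string} of $S$ (that is, $\sigma(i)=1$ iff $i\in S$) when $\max S < n$, and $g(S,n)=0$ otherwise. No variable-word structure on $S$ is needed. Given the solution $\Ical$ with color $i$, take pairwise disjoint $F_0 < F_1 < \cdots$ in $\Ical$, and define $W(n)=1$ for $n\in F_0$, $W(n)=x_j$ for $n\in F_{j+1}$, and $W(n)=0$ otherwise. Then for $\bar a\in 2^k$, $W(\bar a)$ is precisely the length-$\min F_{k+1}$ characteristic string of $S:=F_0\cup\bigcup_{j<k,\,a_j=1}F_{j+1}$, and $S<T:=F_{k+1}$ are both in $\Ical$, so $d(W(\bar a))=g(S,\min T)=i$. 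Note two points your sketch glosses over: the set $S$ above is always \emph{non-empty} only because $F_0$ is unconditionally included, which is exactly why $F_0$ must be made a constant-$1$ block rather than a variable block (a consequence of $\Pcal_{fin}(\Nb)$ excluding $\emptyset$, so $\wfut^2_\ell$ gives no control over the all-zero valuation otherwise); and the claim ``$\{\sigma_S : S\in\Ical\}$ is exactly the valuation set'' is false as stated, because the lengths of the valuations $W(\bar a)$ are dictated by the truncation point $\min F_{k+1}$, not by $\max S$ --- this is why the string must be parameterized by both $S$ and $n$, and why it is $g(S,\min T)$ and not a function of $S$ alone that matches $d(W(\bar a))$.
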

\begin{proof}
Let $f : 2^{<\omega} \to \ell$ be an instance of $\ovw{2}{\ell}$.
Define an instance $g : \Pcal_{fin}(\Nb) \times \Nb \to \ell$ of $\wfut^2_\ell$ as follows:
Given some $S \in \Pcal_{fin}(\Nb)$ and $n \in \Nb$, if $\max S < n$, then set $g(S, n) = f(\sigma)$,
where $\sigma$ is the binary string of length $n$ defined by $\sigma(i) = 1$ iff $i \in S$.
If $n \leq \max S$, set $g(S, n) = 0$. By $\wfut^2_\ell$, there is an IP collection $\Ical$
and a color $i < \ell$ such that $g(S, \min T) = i$ for every $S < T \in \Ical$.
Compute from $\Ical$ an infinite increasing sequence of pairwise disjoint finite sets
$F_0 < F_1 < \dots$ Let $W$ be the infinite variable word defined by 
$$
W(n) = \left\{\begin{array}{ll}
	1 & \mbox{ if } n \in F_0\\
	x_i & \mbox{ if } n \in F_i \mbox{ for some } i \geq 1\\
	0 & \mbox{ otherwise}	
\end{array}\right.
$$
The variable word $W$ and the sequence of the $F$'s is a solution to the instance $f$ of $\ovw{2}{\ell}$.
\end{proof}


\begin{corollary}
$\rca \vdash \aca^{+} \to \forall \ell \ovw{2}{\ell}$.
\end{corollary}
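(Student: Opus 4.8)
The plan is to chain together the two theorems just proved with the classical fact, due to Hindman (in the Finite Union Theorem formulation), that $\aca^+$ suffices to prove $\fut_\ell$ for every $\ell$. Recall $\aca^+$ is the subsystem of second-order arithmetic asserting that for every set $X$, the $\omega$-th Turing jump $X^{(\omega)}$ exists. Blass, Hirst, and Simpson~\cite{BlassHirstSimpson1987logical} showed that $\rca + \aca^+ \vdash \fut_\ell$ (equivalently, Hindman's theorem) for every $\ell$, and this is essentially the only ingredient beyond the excerpt that is needed.

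Concretely, I would argue in $\rca$ as follows. Fix $\ell$ and assume $\aca^+$. Since $\aca^+$ proves $\fut_\ell$ (by the cited result of Blass--Hirst--Simpson), we have $\rca + \aca^+ \vdash \fut_\ell$, hence $\rca + \aca^+ \vdash \forall \ell\, \fut_\ell$. By the first theorem of this section, $\rca \vdash \forall \ell(\fut_\ell \to \wfut^2_\ell)$, so $\rca + \aca^+ \vdash \forall \ell\, \wfut^2_\ell$. By the second theorem, $\rca \vdash \forall \ell(\wfut^2_\ell \to \ovw{2}{\ell})$, so $\rca + \aca^+ \vdash \forall \ell\, \ovw{2}{\ell}$. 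This is precisely the statement of the corollary (reading $\aca^{+}$ for the system whose sole additional axiom over $\aca$ asserts the existence of arbitrary iterated jumps up to $\omega$).

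The only genuine subtlety is bookkeeping about which version of $\fut_\ell$ is needed: the first theorem above applies $\fut_\ell$ not to a computable coloring but to a $\Delta^{0,f\oplus C}_2$ coloring $\tilde f$, so one needs $\fut_\ell$ as a $\Pi^1_2$ statement quantifying over all set instances, which is exactly how it is formulated in the Statement above and exactly what $\aca^+$ proves (the Blass--Hirst--Simpson upper bound is an implication over $\rca$, hence relativizes automatically). One should also note that $\fut_\ell \to \aca$ and $\aca \to \coh$ over $\rca$, as used inside the proof of the first theorem, so no further axioms are smuggled in. There is no real obstacle here: this corollary is a pure concatenation of the two implications of this section with the classical $\aca^+$ bound for Hindman's theorem, and the proof is a one-line syllogism.
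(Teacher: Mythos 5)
Your proposal is correct and is essentially the paper's own argument: the paper's proof is the one-line chain $\aca^{+} \imp \forall \ell\, \fut_\ell \imp \forall \ell\, \wfut^2_\ell \imp \forall \ell\, \ovw{2}{\ell}$ over $\rca$, where the first implication is exactly the Blass--Hirst--Simpson upper bound on Hindman's theorem that you cite, and the latter two are the two preceding theorems of the section. Your extra remarks on relativization and on the $\Delta^{0,f\oplus C}_2$ instance of $\fut_\ell$ are accurate bookkeeping but do not change the route.
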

\begin{proof}
Immediate since $\aca^{+} \imp \forall \ell \fut_\ell \imp \forall \ell \wfut^2_\ell \imp \forall \ell \ovw{2}{\ell}$ over~$\rca$.
\end{proof}

\section{A proof of the Ordered Variable Word theorem in ACA}\label{sect:ovw-aca}

The proof of the previous section gave a very coarse computability-theoretic upper bound of the Ordered Variable Word theorem in terms of $\omega$-jumps. In this section, we give a direct combinatorial proof of $\ovw{2}{\ell}$ in $\rca + \aca$. Actually, every PA degree relative to $\emptyset'$ is sufficient to compute a solution of a computable instance of $\ovw{2}{\ell}$.
	We thereby answer a question of Miller and Solomon~\cite{Miller2004Effectiveness}.

\begin{theorem}\label{ovwth2}
For every $\ell \in\omega$,
every computable instance $c$ of $\ovw{2}{\ell}$,
every PA degree over $\emptyset'$ computes a solution to $c$.
\end{theorem}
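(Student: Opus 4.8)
The plan is to build the desired ordered variable word $W$ by a forcing-style construction, where a PA degree $P$ over $\emptyset'$ is used to make the infinitely many ``color-consistency'' decisions that an $\emptyset'$-oracle alone cannot make effectively. First I would set up the combinatorial bookkeeping: an ordered variable word on $\{0,1\}$ is essentially determined by an increasing sequence of finite ``variable blocks'' $F_0 < F_1 < \dots$ together with the constant assignment on the complement, and $W(\bar a)$ for $\bar a \in 2^{<\omega}$ truncates just before block $F_{|\bar a|}$. So a condition should be a finite ordered variable word $\sigma$ (equivalently, finitely many blocks already placed, plus a current length $n$), and extending a condition means appending more constant bits and, occasionally, opening a new variable block. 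The target set $\{W(\bar a) : \bar a\}$ is a tree-like family of strings: fixing the first $k$ variables to a value $\bar a \in 2^k$ lands you at a specific string of length $n_k = \min F_k$, and the strings of the family lying strictly between length $n_k$ and $n_{k+1}$ are exactly the extensions of $W(\bar a)$ obtained by filling in the constant bits up to length $n_{k+1}$, for the $2^k$ many choices of $\bar a$.

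The core obstacle, and the reason $\aca$ (rather than just $\wkl$) shows up, is exactly as in the Finite Union Theorem route: when we decide to open the next variable block $F_{k}$ at stage $n$, we need the new block to be ``color-correct'' relative to \emph{all} $\bar a \in 2^{<\omega}$ simultaneously, and whether a candidate block works is a $\Pi^0_1$ condition over $\emptyset'$ (it asserts agreement of certain limits of $c$-colors). I would organize this via a $\emptyset'$-computable tree $T$ of potential ``coloring-stabilization'' guesses: at each point we have finitely many blocks and we want to know, for each of the finitely many current strings $W(\bar a)$ and each color $i < \ell$, whether $c$ assigns color $i$ cofinally often along the constant-extensions; this is a $\Sigma^0_2$ / $\Pi^0_2$ question, so a $\emptyset'$-oracle can enumerate an infinite tree of finite approximations to the correct answers, and a PA-over-$\emptyset'$ degree $P$ can pick an infinite path. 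Along that path the relevant limits exist (uniformly in $\bar a$), so we get, for each $\bar a$, a well-defined ``stable color'' $\tilde c(\bar a)$ of the constant-extension behavior past length $n_{|\bar a|}$; a pigeonhole then gives, on an infinite $P$-computable subsequence of candidate block positions, a single color $i$.

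Granting that infrastructure, the construction is a straightforward finite-extension argument. We maintain a condition (current ordered variable word $\sigma$ with blocks $F_0 < \dots < F_{k-1}$ and length $n_k$) together with the guess-path information, all homogeneous for the fixed color $i$. At step $k$: using the stabilization data, extend $\sigma$ by constant bits to a length $n_{k+1}^{-}$ past which, for every $\bar a \in 2^k$, the color $c(W(\bar a)\,^\smallfrown \text{(constant bits)})$ has settled to $i$ for all lengths considered so far; then open the new block $F_k := \{n_{k+1}^{-}\}$ (a singleton is enough to preserve orderedness and to keep the blocks increasing), set $n_{k+1} = n_{k+1}^{-}+1$, and verify that for every $\bar a' \in 2^{k+1}$ the relevant colors are still $i$, shrinking/continuing along the guess path as needed. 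Iterating produces $F_0 < F_1 < \dots$ and hence an infinite ordered variable word $W$, all of whose $W(\bar a)$ have $c$-color $i$, i.e. a solution to $c$. Formalizing in $\rca$: the whole recursion is primitive recursive in $P$, the guesses are $P$-computable, and the only non-$\Delta^0_1$ facts used are the $\Sigma^0_2$ stabilization statements, which are available since $P \geq_T \emptyset'$; thus $\rca$ suffices to carry out the argument with $P$ as a set parameter. I expect the main technical difficulty to be the uniform handling of \emph{all} $\bar a \in 2^{<\omega}$ at once — i.e. showing that a single PA-over-$\emptyset'$ path simultaneously stabilizes the coloring behavior along every branch of the growing tree of strings — and getting the pigeonhole-for-a-common-color step to interact correctly with the tree of guesses rather than just with a single limit.
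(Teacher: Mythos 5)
Your high-level architecture — approximate the construction by a suitable $\emptyset'$-computable, $\emptyset'$-computably-bounded tree and let a PA-over-$\emptyset'$ degree pick a path — is indeed what the paper does. But there are two genuine gaps in the combinatorial core, and they are exactly the places where the work happens.

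First, the ``stabilization + pigeonhole'' step does not suffice and is the crux of the missing argument. You propose to extend by constants until, for each of the $2^k$ current strings $W(\bar a)$, the color along constant extensions has settled, and then to ``pigeonhole over candidate block positions'' to obtain one color $i$. But stabilization (COH-style limits) is per-branch: the $2^k$ limits $\lim_\tau c(W(\bar a)\tau)$ need not agree with each other, and they certainly need not agree with the colors of the $2^{k+1}$ new strings you create when you open the next variable block $F_k$, whose effect is to flip bit $\min F_k$ in half of them. A pigeonhole over block positions gives you a repeated color for a single branch as you slide one flip position, not simultaneous agreement across all $2^k$ (and then $2^{k+1}$, $2^{k+2}$, \dots) branches. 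This is precisely why the $\fut$-based route needs Hindman's theorem and why the paper's direct $\aca$ proof replaces it with a nontrivial finite combinatorial lemma: given $\rho$, one can $\tilde c'$-computably find an extension $\tilde\rho$ and a bounded set $P$ of zero-positions (of size $< \ell^{\ell+2}$) such that \emph{for every} $\sigma\succeq\tilde\rho$ there are $P'<\tilde P\subseteq P$ with $\tilde c(\sigma_{P'})=\tilde c((\sigma_{P'})_{\tilde P})=\tilde c(\sigma_{P'}\uhr\min\tilde P)$. The universal quantifier over all future extensions $\sigma$ (and the agreement with the truncation at $\min\tilde P$, which your proposal does not track at all) is what lets one iterate this to produce a variable word monochromatic for all instantiations. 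Obtaining that lemma requires a greedy nested search with resets over a shrinking color set, not a single pigeonhole, and your proposal contains no substitute for it.

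Second, even granting a stabilization-style argument, what it naturally yields is the paper's weaker bound PA over $\emptyset''$, not $\emptyset'$. Deciding ``does this candidate block work for all future constant extensions'' is $\Pi^0_2$ in $c$, i.e. $\Pi^0_1$ over $\emptyset'$, but the construction needs to chain infinitely many such decisions while also knowing their \emph{answers} (which are $\Delta^0_3$ facts), so the resulting tree of guesses is only $\emptyset''$-computable. The paper gets from $\emptyset''$ down to $\emptyset'$ by a further trick: before advancing from $\tilde\rho_i$ to $\tilde\rho_{i+1}$, it $\emptyset'$-computes finitely many representatives $\tilde\rho_i^0\prec\cdots\prec\tilde\rho_i^{r_i}$ that already realize every color the derived coloring $c_i$ can take on extensions of $\tilde\rho_i^{r_i}$; membership in the guess tree is then witnessed by one of finitely many $\emptyset'$-known strings rather than by a $\Pi^0_2$ assertion, which makes the tree $\emptyset'$-computable. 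Your proposal does not address this drop in arithmetic complexity, so as written it would at best reproduce Theorem~\ref{ovwth1} (PA over $\emptyset''$), not Theorem~\ref{ovwth2}.
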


A formalization of Theorem~\ref{ovwth2} yields
a proof of Theorem~\ref{thm:aca-ovw2}.

\begin{proof}[Proof of Theorem~\ref{thm:aca-ovw2}]
The proof of Theorem~\ref{ovwth2} can be
formalized within $\rca + \aca$. Indeed, the arguments require only arithmetical induction to be carried out, and
every model of $\rca + \aca$ is a model of
the statement ``For every set $X$, there is a set of PA degree over the jump of $X$.''
\end{proof}

Let us first introduce some notation.
For a finite set $F$ and a string $\sigma \in 2^{<\omega}$
let $\sigma_F$ be the binary string of length $|\sigma|$
defined by $\sigma_F(i) = \sigma(i)$ if $i \not \in F$, and $\sigma_F(i) = 1-\sigma(i)$ otherwise. Let $\leq_{lex}$ denote the shortlex order
on  $\omega^{<\omega}$, that is, the order with the shortest length first, and with the strings of same length sorted lexicographically.

In what follows, fix a coloring $\t{c}:2^{<\omega}\rightarrow \ell$,
and a string $\rho\in 2^{<\omega}$.

The main combinatorial lemma we use is
 Lemma~\ref{ovwprop1}.
As a warm up, we first prove the following
lemma \ref{ovwprop0}, which is a consequence of Lemma~\ref{ovwprop1}
and the proof is somehow similar but
much simpler. In the following lemma, one may think of $\rho_{P'}$ as a finite variable word, where the positions at $\t{P}$ are replaced by a same variable kind.

\begin{lemma}\label{ovwprop0}
For any
$P\subseteq \{0,\cdots,|\rho|-1\}$ with
$(\forall n\in P)[\rho(n) = 0]
\wedge |P|\geq \ell$,
there exist two
subsets $P'<\t{P}$ of $P$
with $\t{P}\ne\emptyset$
such that $\t{c}(\alpha) = \t{c}(\alpha_{\t{P}})$ where $\alpha = \rho_{P'}$.
\end{lemma}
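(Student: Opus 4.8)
The plan is to prove this by a straightforward pigeonhole argument over a nested chain of ``flip sets'' inside $P$.

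First I would enumerate $P = \{p_0 < p_1 < \dots < p_{k-1}\}$ in increasing order, where $k = |P| \geq \ell$, and consider the nested chain $\emptyset = Q_0 \subsetneq Q_1 \subsetneq \dots \subsetneq Q_k = P$ given by $Q_j = \{p_0, \dots, p_{j-1}\}$. The key observation is that the map $j \mapsto \t{c}(\rho_{Q_j})$ sends the $(k+1)$-element set $\{0, 1, \dots, k\}$ into $\ell$, and since $k + 1 \geq \ell + 1$ the pigeonhole principle yields indices $j_1 < j_2 \leq k$ with $\t{c}(\rho_{Q_{j_1}}) = \t{c}(\rho_{Q_{j_2}})$.

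Then I would set $P' = Q_{j_1}$ and $\t{P} = Q_{j_2} \setminus Q_{j_1} = \{p_{j_1}, \dots, p_{j_2-1}\}$, and check the required properties: both are subsets of $P$; $P' < \t{P}$ because every element of $P'$ is smaller than $p_{j_1}$; and $\t{P} \neq \emptyset$ because $j_1 < j_2$. The last step is the bookkeeping identity $\alpha_{\t{P}} = \rho_{Q_{j_2}}$ where $\alpha = \rho_{P'}$: since $P'$ and $\t{P}$ are disjoint with union $Q_{j_2}$, flipping the bits of $\rho$ at $P'$ and then at $\t{P}$ is the same as flipping them at $P' \cup \t{P} = Q_{j_2}$, so that $\t{c}(\alpha) = \t{c}(\rho_{Q_{j_1}}) = \t{c}(\rho_{Q_{j_2}}) = \t{c}(\alpha_{\t{P}})$, which is exactly what is claimed.

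There is no real obstacle in this warm-up lemma; the only spot demanding a touch of care is verifying $\alpha_{\t{P}} = \rho_{P' \cup \t{P}}$ position-by-position, which uses precisely the disjointness of $P'$ and $\t{P}$ furnished by $P' < \t{P}$. The hypothesis that $\rho(n) = 0$ for $n \in P$ plays no role in this argument; it is kept only to support the intended reading of $\rho_{P'}$ as a finite variable word whose variable positions carry value $0$, and the genuine combinatorial effort of the section is deferred to the stronger Lemma~\ref{ovwprop1}.
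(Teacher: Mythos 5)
Your proof is correct and takes essentially the same approach as the paper: both define the increasing chain $Q_j = \{p_0,\dots,p_{j-1}\}$, apply pigeonhole to the $\ell+1$ (or more) colors $\t{c}(\rho_{Q_j})$, and set $P' = Q_{j_1}$, $\t{P} = Q_{j_2}\setminus Q_{j_1}$. Your remark that the hypothesis $\rho(n)=0$ on $P$ is unused here is also accurate.
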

\begin{proof}
Suppose $P = \{p_0 < \cdots < p_{m-1}\}$.
Let $\ell_0, \dots, \ell_m$ be defined by
$\ell_i = \t{c}(\rho_{\{p_0, \dots, p_{i-1}\}})$. In particular, $\ell_0 = \t{c}(\rho)$.
Since $|P| = m \geq \ell$,
so among $\ell_0,\cdots, \ell_m$,
there must exists $i<j$ such
that $\ell_i=\ell_j$.
Let $P' = \{p_0,\cdots,p_{i-1}\}$
(if $i=0$ then $P' = \emptyset$), and
$\t{P} = \{p_i,\cdots,p_{j-1}\}$,
let $\alpha = \rho_{P'}$.
Clearly $P'<\t{P}$ and $\t{P}\ne\emptyset$.
It is also easy to see that
$\t{c}(\alpha) = \ell_i = \ell_j = \t{c}(\alpha_{\t{P}})$.
\end{proof}

We now prove a technical lemma used in the proof of our main combinatorial
lemma (Lemma~\ref{ovwprop1}). The sequence in the following lemma is obtained by a simple greedy algorithm, with finitely many resets.

\begin{lemma}\label{ovwclaim1}
There exists a nonempty set
of colors $L\subseteq \{0,1,\cdots,\ell-1\}$,
$|L|+1$ many sets of binary strings
$\Gamma_0 = \{\tau^{\eta}\}_{\eta\in L},
\Gamma_1=\{\tau^\eta\}_{\eta\in L^2},
\cdots,\Gamma_{|L|}=\{\tau^\eta\}_{\eta\in L^{|L|+1}}$,
such that, letting
$$
\t{\eta} =
 \underbrace{\max L *\max L *\cdots*\max L}_{|L|+1\text{ many }}
$$
 and letting $\t{\rho} = \tau^{\t{\eta}}*0$, the following holds:
\begin{enumerate}
\item $\rho\prec\Gamma_0$ and $
\tau^{\eta}\prec\tau^\beta\Leftrightarrow
\eta<_{lex}\beta$;
\item $\t{\rho}(|\tau|) = 0$
for all $\tau\in \Gamma_i,i\leq |L|$;

\item for all $i\leq |L|$, $\eta\in L^{i+1}$,
let $\eta_0\prec\eta_1\prec\cdots\prec\eta_{i-1}$ denote
all nonempty predecessors of $\eta$, let
$Q = \big\{|\tau^{\eta_0}|,|\tau^{\eta_1}|,
\cdots,|\tau^{\eta_{i-1}}|\big\}$
(if $i=0$ then $Q=\emptyset$),
then $\t{c}(\tau^{\eta}_Q) = \eta(i)$;

\item let $P = \{|\tau^\eta|\}_{\eta\in L^{\leq |L|+1}}$,
for all subset $Q$ of $P$, all $\tau\succeq\t{\rho}$,
$\t{c}\big(\tau_Q\big)
\in L$.
\end{enumerate}
Moreover,
$\Gamma_i,i\leq |L|$ is  computable in the jump of $\t{c}$,
 uniformly in $\rho$.

\end{lemma}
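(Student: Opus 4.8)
The plan is to build the required data by a greedy search relative to the jump $\tilde{c}'$, restarted (``reset'') at most $\ell$ times. The single $\tilde{c}'$-primitive I would use is: for a string $\sigma \succeq \rho$ let $\mathrm{Col}(\sigma) = \{\, \tilde{c}(\tau) : \tau \succeq \sigma \,\} \subseteq \{0, \dots, \ell-1\}$ be the set of colors occurring in the cone above $\sigma$. This set is always non-empty, it is non-increasing under extension of $\sigma$, and ``$j \in \mathrm{Col}(\sigma)$'' is a $\Sigma^0_1(\tilde{c})$ statement, so $\mathrm{Col}(\sigma)$ is uniformly computable from $\tilde{c}'$. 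These are the only features of $\tilde{c}$ that enter.

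The construction keeps a current string $\sigma$ (initially a one-bit extension of $\rho$) together with $L := \mathrm{Col}(\sigma)$, and assigns strings $\tau^\eta$ to the indices $\eta \in L^{\le |L|+1}$ in increasing $<_{lex}$-order, each a strict extension of its predecessor with first ``new'' bit equal to $0$, the first one a strict extension of $\sigma$; thus every $|\tau^\eta|$ is $\geq |\sigma|$. At the step for $\eta = \eta'*a$, let $\tau^{\mathrm{prev}}$ be the string assigned just before (or $\sigma$ if $\eta$ is $<_{lex}$-least), let $Q$ be the set of lengths of the non-empty proper prefixes of $\eta$ (all already assigned), and set $\sigma' = (\tau^{\mathrm{prev}})_Q * 0$. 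Using $\tilde{c}'$ I would test whether $a \in \mathrm{Col}(\sigma')$. If so, $\tilde{c}$-computably search for a non-empty $w$ with $w(0)=0$ and $\tilde{c}((\tau^{\mathrm{prev}})_Q * w) = a$ (one exists by the test) and put $\tau^\eta = \tau^{\mathrm{prev}} * w$. If not, \emph{reset}: replace $\sigma$ by $\sigma'$, recompute $L := \mathrm{Col}(\sigma')$, discard the $\tau$'s, and start over. Since $\sigma' \succeq \sigma$ always, $\mathrm{Col}(\sigma') \subseteq L$; and when a reset fires, $a \in L \setminus \mathrm{Col}(\sigma')$, so $L$ strictly shrinks. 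As $L \subseteq \{0,\dots,\ell-1\}$ is never empty there are at most $\ell$ resets, each round is a finite $\tilde{c}'$-effective computation, and the process halts with output computable from $\tilde{c}'$ uniformly in $\rho$. Finally set $\tilde\eta = (\max L)^{|L|+1}$ and $\tilde\rho = \tau^{\tilde\eta}*0$.

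I would then verify the four clauses for the surviving round. Clause~(1) is immediate from the processing order and from $\sigma \succ \rho$. For clause~(2): every string was built with first new bit $0$, so for $\eta \ne \tilde\eta$ the bit of $\tilde\rho$ at position $|\tau^\eta|$ equals that of the successor string of $\tau^\eta$ there, namely $0$; and $\tilde\rho(|\tau^{\tilde\eta}|)=0$ is the appended bit. Clause~(3) holds step by step: in the surviving round every test passed, so each $\tau^\eta$ was chosen with $\tilde{c}((\tau^\eta)_Q) = a = \eta(|\eta|-1)$, with $Q$ exactly the prefix-length set of the statement. Clause~(4) comes for free: the branch positions $P$ all lie $\geq |\sigma|$, so for $\tau \succeq \tilde\rho \succeq \sigma$ and $Q \subseteq P$ the string $\tau_Q$ still extends $\sigma$, whence $\tilde{c}(\tau_Q) \in \mathrm{Col}(\sigma) = L$.

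The delicate part is the bookkeeping that keeps the argument at the level of $\tilde{c}'$: the only genuinely non-$\tilde{c}$ questions are ``$j \in \mathrm{Col}(\sigma')$'', which are precisely $\Sigma^0_1(\tilde{c})$ and thus $\tilde{c}'$-decidable, and each positive answer is followed by an ordinary terminating $\tilde{c}$-search. One must also confirm that the reset rule — together with the $0$'s forced for clause~(2) — is the only way the greedy step can fail and that each reset strictly shrinks $L$; this is what bounds the number of resets by $\ell$, and the ``$|L|+1$'' is then merely the number of levels the construction is asked to produce. A little extra care is needed in the degenerate case $|L|=1$ (where $Q$ may contain $|\tau^{\mathrm{prev}}|$, but all strings in play stay in the cone of $\sigma$, which is monochromatic), and in the case where a demanded color is realized only at the root of the current cone, which is why $\sigma$ is taken one bit longer than $\rho$ at the start of each round.
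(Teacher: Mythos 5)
Your proposal is correct and follows essentially the same greedy-with-resets approach as the paper. The one genuine difference is in the bookkeeping of the color set $L$: the paper initializes $L = \{0,\dots,\ell-1\}$ and on a reset removes exactly the single color $\eta(|\eta|-1)$ whose search just failed, whereas you maintain the invariant $L = \mathrm{Col}(\sigma)$ (the set of colors realized in the cone above the current base string) and on a reset recompute $L$ from scratch. The two are morally equivalent — both shrink $L$ strictly on each reset and bound the number of resets by $\ell$ — but your version has a concrete advantage: it makes immediately transparent both why $L$ can never become empty (it is a $\mathrm{Col}$, hence nonempty) and why clause~(4) holds (any flip at positions in $P$ keeps the string in the cone above $\sigma$, so its color lies in $\mathrm{Col}(\sigma) = L$). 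In the paper this requires the reader to track the implicit invariant that a removed color never reappears above the current $\rho$. You also correctly isolate the only delicate point — the possibility $|\tau^{\mathrm{prev}}| \in Q$, which would make the string you hand to $\tilde c$ differ from $(\tau^\eta)_Q$ at one position — and correctly observe that this can only occur when $|L|=1$ (since the $<_{lex}$-predecessor of $\eta$ is a proper prefix of $\eta$ only if $\min L = \max L$), in which case the cone is monochromatic and everything is trivial. The complexity analysis ($\Sigma^0_1(\tilde c)$ queries, bounded resets, hence uniform $\tilde c'$-computability) matches the paper's.
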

\begin{proof}
We firstly show how to find $\Gamma_0$.
Start with $L = \{0,1,\cdots,\ell-1\}$.
At step 1, try to find a string $\tau\in 2^{<\omega}$ such that
$\t{c}(\rho\tau) =  0$ and
let $\tau^0 = \rho\tau$. Then
try to find a $\tau$ such that
$\t{c}(\tau^00\tau) = 1$ and
let $\tau^1 = \tau^00\tau$. Generally, after $\tau^j$ is found,
try to find $\tau$ such that
$\t{c}(\tau^j0\tau)=j+1$
and let $\tau^{j+1} = \tau^j0\tau$ if
$\tau$ is found.
If during the above process, after $\tau^{j-1}$ is
defined ( $\tau^{-1} = \rho$ ),
there is no $\tau$ such that
$\t{c}(\tau^{j-1}0\tau) = j$,
then we start all over again
with $\rho$ replaced by $\rho_1 =
\tau^j0$ and with $L$ replaced by $ L \setminus \{j\}$.

%
%

Generally, given a set of colors $L$ and after $\tau^{\beta}$ is found,
let $\eta$
be the immediate successor (with respect to $\leq_{lex}$ order restricted to $L$-strings)
 of $\beta$,
let $\eta_0\prec\eta_1\prec\cdots\prec\eta_{i-1}$ denote
all nonempty predecessors of $\eta$, let
$Q = \big\{|\tau^{\eta_0}|,|\tau^{\eta_1}|,
\cdots,|\tau^{\eta_{i-1}}|\big\}$
(if $i=0$ then $Q=\emptyset$),
we try to find $\tau$ such that
$\t{c}((\tau^{\beta}0\tau)_{Q}) = \eta(|\eta|-1)$.
If such a string $\tau$ does not exists then we
start  all over again with $\rho $
replaced by  $\tau^\beta0_{Q}$ and $L$ replaced by $L \setminus \{\eta(|\eta|-1)\}$.
If such $\tau$ exists then let $\tau^\eta =
\tau^\beta0\tau$.

Note that we have to
start over for at most $\ell-1$ times before we
ultimately succeed since
there are $\ell$ colors in total.
It is plain to check all the four items.
Also note that the sequence $\Gamma_0,\cdots,\Gamma_{|L|}$ is
$\t{c}'$-computable since we only need to
use the jump of $\t{c}$ to know whether the next $\tau^{\eta}$ can
 be found.

\end{proof}

\begin{lemma}\label{ovwprop1}

There exists a string $\t{\rho}\succ\rho$
 and a finite set $P\subseteq
 \big\{|\rho|,\cdots, |\t{\rho}|-1\big\}$ with
 $(\forall i\in P)[\t{\rho}(i) = 0]$
 such that
 for all $\sigma\succeq \t{\rho}$ there exists
 two subsets $P'<\t{P}$ of $P$ with $\t{P}\ne\emptyset$
 such that, letting $\alpha = \sigma_{P'}$,
$\t{c}(\alpha) = \t{c}(\alpha_{\t{P}}) = \t{c}(\alpha
 \uhr {\min \t{P} })$.
 Moreover, $|P|< \ell^{\ell+2}$, and $\t{\rho}, P$,
are computable in the jump of $\t{c}$, uniformly in $\rho$.
\end{lemma}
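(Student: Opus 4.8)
\textbf{Proof plan for Lemma~\ref{ovwprop1}.}

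The plan is to bootstrap from the structure produced by Lemma~\ref{ovwclaim1}, using its item~(4) as the crucial ``closure'' property that lets us run a Lemma~\ref{ovwprop0}-style pigeonhole argument \emph{uniformly} over all extensions $\sigma \succeq \t\rho$. Concretely, I would first invoke Lemma~\ref{ovwclaim1} to obtain the nonempty color set $L$, the string $\t\rho = \tau^{\t\eta}*0$, and the finite set $P = \{|\tau^\eta|\}_{\eta \in L^{\leq |L|+1}}$; by construction every position of $P$ carries value $0$ in $\t\rho$, and $|P| \leq |L| + |L|^2 + \dots + |L|^{|L|+1} < \ell^{\ell+2}$, which already gives the cardinality bound and the uniform $\t c'$-computability. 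The point of item~(4) is that for \emph{any} $\sigma \succeq \t\rho$ and \emph{any} $Q \subseteq P$, the color $\t c(\sigma_Q)$ lies in $L$, so flipping positions in $P$ never escapes the ``safe'' palette $L$.

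Next, fix an arbitrary $\sigma \succeq \t\rho$. The idea is to walk up the tree $L^{\leq |L|+1}$ along the maximal branch $\eta_0 \prec \eta_1 \prec \dots \prec \t\eta$ (where $\eta_k = \max L * \dots * \max L$ has length $k$), recording for each $k$ the color obtained by flipping, in $\sigma$, exactly the positions $Q_k := \{|\tau^{\eta_0}|, \dots, |\tau^{\eta_{k-1}}|\}$ corresponding to the proper predecessors of $\eta_k$. This produces a sequence of $|L|+2$ colors, all lying in $L$ by item~(4). Since $|L| < |L| + 2$, two of them coincide, say at levels $i < j$; set $P' := Q_i$ and $\t P := Q_j \setminus Q_i = \{|\tau^{\eta_i}|, \dots, |\tau^{\eta_{j-1}}|\}$, which is nonempty and satisfies $P' < \t P$ as subsets of $\Nb$. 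Writing $\alpha = \sigma_{P'}$, we get $\t c(\alpha) = \t c(\alpha_{\t P})$ exactly as in Lemma~\ref{ovwprop0}, because $\alpha_{\t P} = \sigma_{Q_j}$ and the two recorded colors agree.

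The one genuinely new demand — beyond what Lemma~\ref{ovwprop0} delivers — is the \emph{third} equality $\t c(\alpha) = \t c(\alpha \uhr{\min \t P})$, i.e.\ matching the color of an initial segment. Here is where item~(3) of Lemma~\ref{ovwclaim1} is designed to help: it says $\t c(\tau^{\eta_i}_{Q_i}) = \eta_i(i-1) = \max L$ (for $i \geq 1$; the $i=0$ case is $\t c(\rho) \in L$, handled separately), and $\min \t P = |\tau^{\eta_i}|$ while $P' = Q_i \subseteq \{0, \dots, |\tau^{\eta_i}|-1\}$, so $\alpha \uhr{\min \t P} = \sigma \uhr{|\tau^{\eta_i}|}$ with the $P'$-flips applied, which is precisely $\tau^{\eta_i}_{Q_i}$ provided $\sigma$ agrees with $\tau^{\eta_i}$ on the first $|\tau^{\eta_i}|$ bits — and it does, since $\tau^{\eta_i} \prec \t\rho \preceq \sigma$. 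Thus $\t c(\alpha \uhr{\min \t P}) = \max L$, and I would arrange the pigeonhole so that the repeated color is forced to be $\max L$: this is the reason for walking up the \emph{fixed} branch through $\max L$ rather than an arbitrary one, since along that branch the recorded colors are, by item~(3) applied at $\sigma$ rather than at the greedy string, all equal to $\max L$ at the appropriate coordinate — so in fact $\t c(\sigma_{Q_k})$ and $\t c(\sigma \uhr{|\tau^{\eta_k}|}$-with-$Q_{k-1}$-flips$)$ can be identified level by level.

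\textbf{Main obstacle.} I expect the delicate point to be reconciling item~(3), which was established for the \emph{specific greedy strings} $\tau^\eta$ built in Lemma~\ref{ovwclaim1}, with the requirement here that the conclusion hold for \emph{every} $\sigma \succeq \t\rho$. The resolution should be that any such $\sigma$ extends $\t\rho = \tau^{\t\eta}*0$, hence extends every $\tau^{\eta_k}$ on the maximal branch, so replacing $\tau^{\eta_k}$ by $\sigma$ in the flip-and-color expression changes nothing on the relevant (initial) coordinates; the tail of $\sigma$ only ever appears through $\sigma_Q$ with $Q \subseteq P$, whose color is controlled by item~(4). Making this substitution rigorous — in particular checking that $\min \t P = |\tau^{\eta_i}|$, that $P' \subseteq \{0,\dots,\min\t P - 1\}$, and that the three strings $\alpha$, $\alpha_{\t P}$, $\alpha\uhr{\min\t P}$ unwind to $\sigma_{Q_i}$, $\sigma_{Q_j}$, $(\sigma\uhr{|\tau^{\eta_i}|})_{Q_i}$ respectively — is the bookkeeping core of the argument, but it is routine once the branch through $\max L$ is fixed as the scaffold.
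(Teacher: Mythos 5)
There is a genuine gap, and it is exactly the point your ``Main obstacle'' paragraph gestures at but does not resolve. Item~(3) of Lemma~\ref{ovwclaim1} tells you $\t c(\tau^{\eta}_Q) = \eta(i)$ only for the specific greedy strings $\tau^\eta$; it does \emph{not} say that $\t c(\sigma_{Q_k}) = \max L$ for an arbitrary $\sigma \succeq \t\rho$. Item~(4) only guarantees $\t c(\sigma_{Q_k}) \in L$. So when you pigeonhole along the \emph{fixed} branch $(\max L)^k$, the repeated color is some $\ell \in L$ that you do not control, while the truncation $\alpha \uhr \min \t P$ unwinds (correctly, since $\tau^{\zeta_i} \prec \t\rho \preceq \sigma$) to $\tau^{\zeta_i}_{Q_i}$, whose color by item~(3) is $\max L$. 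There is no reason $\ell = \max L$, so the third equality $\t c(\alpha) = \t c(\alpha \uhr \min \t P)$ can fail. You cannot ``arrange the pigeonhole so that the repeated color is forced to be $\max L$'': with a fixed branch the repeated value is whatever the coloring of $\sigma$ hands you.

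The paper's proof resolves this by choosing the branch \emph{adaptively in $\sigma$}: define $\ell_0 = \t c(\sigma)$, $p_i = |\tau^{\ell_0\cdots\ell_i}|$, $\ell_{i+1} = \t c(\sigma_{\{p_0,\dots,p_i\}})$. That is, at each step you observe the actual color and then descend into the child of the tree $L^{\leq |L|+1}$ labelled by that observed color. When $\ell_i = \ell_j$, you have $\t c(\sigma_{P'}) = \ell_i$ by construction, and item~(3) applied to $\eta = \ell_0\cdots\ell_i$ gives $\t c(\tau^{\ell_0\cdots\ell_i}_{P'}) = \eta(i) = \ell_i$ as well, since $P'$ is precisely the set of lengths of the nonempty proper prefixes of this $\eta$. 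Now $\min \t P = p_i = |\tau^{\ell_0\cdots\ell_i}|$ and $\alpha \uhr \min \t P = \tau^{\ell_0\cdots\ell_i}_{P'}$, so all three colors coincide. The adaptive choice of branch is the one genuinely new idea relative to Lemma~\ref{ovwprop0}, and it is exactly why Lemma~\ref{ovwclaim1} constructs the entire tree $\{\tau^\eta\}_{\eta\in L^{\leq|L|+1}}$ rather than a single chain: you cannot know in advance which branch $\sigma$ will force you down. Your bookkeeping on $P$, $|P|$, the $\t c'$-computability, and the verification that $\tau^\eta$ is a prefix of $\sigma$ are all fine; it is only the ``fixed branch through $\max L$'' choice that breaks the argument.
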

\begin{proof}
Let $L$ and $\t{\rho}$ satisfy Lemma~\ref{ovwclaim1}.
We claim that $\t{\rho}$ and $P = \{|\tau^\eta|\}_{\eta\in L^{\leq |L|+1}}$
satisfy the current lemma.
It is clear
by item 1 of Lemma~\ref{ovwclaim1} that $\t{\rho}\succ\rho$
and by item 2 of  Lemma~\ref{ovwclaim1} that
$(\forall i\in P)[\t{\rho}(i) = 0]$.

Fix an arbitrary $\sigma\succeq\t{\rho}$.
We now describe how to construct $P'$ and $\t{P}$.
Define $\ell_0, \dots, \ell_{|L|}$
and $p_0, \dots, p_{|L|}$ inductively by
$\ell_0 = \t{c}(\sigma)$,
$\ell_{i+1} = \t{c}(\sigma_{\{p_0,p_1,\cdots,p_i\}})$, and
$p_{i} =|\tau^{\ell_0\cdots \ell_{i}}|$ (where $\tau^{\ell_0\cdots \ell_{i}} \in \Gamma_i$).
%
%
%
%
%
%
Since $\ell_0,\cdots,\ell_{|L|}\in L$ (by item 4 of Lemma~\ref{ovwclaim1}), there is some $i<j\leq |L|$
such that $\ell_i=\ell_j $.
Let $P' = \{p_0,\cdots,p_{i-1}\}$
(if $i=0$ then $P' = \emptyset$),
$\t{P}= \{p_i,\cdots,p_{j-1}\}$,
and let $\alpha = \sigma_{P'}$. We claim that
$\t{c}(\alpha) = \t{c}(\alpha_{\t{P}}) = \t{c}(\alpha\uhr \min \t{P})$.
Note that $\min \t{P}  =p_i= |\tau^{\ell_0\cdots \ell_i}|$.
Therefore
$\alpha\uhr \min \t{P} = \tau^{\ell_0\cdots \ell_i}_{P'}$.
By item 3 of Lemma~\ref{ovwclaim1},
we have $\t{c}(\tau^{\ell_0\cdots \ell_i}_{P'}) = \ell_i $.
Meanwhile, by definition of $\ell_i$,
$\t{c}(\sigma_{P'}) =
\t{c}(\alpha) = \ell_i$.
By definition of $\ell_j$,
$\t{c}(\sigma_{P'\cup \t{P}}) =
\t{c}(\alpha_{\t{P}}) = \ell_j$.
Thus, $\t{c}(\alpha) = \t{c}(\alpha_{\t{P}})
 = \t{c}(\alpha\uhr \min \t{P} )$.

\end{proof}

We say that $(\t{\rho}, P)$ is \emph{$\t{c}$-valid}
if $P$ and $\t{\rho}$ satisfy Lemma~\ref{ovwprop1}.
We say that $(P',\t{P})$ \emph{witnesses $\t{c}$-validity of $(\t{\rho}, P)$ for $\sigma \succeq \t{\rho}$} if $P' < \t{P}  \subseteq P$, and letting $\alpha = \sigma_{P'}$,
$\t{c}(\alpha) = \t{c}(\alpha_{\t{P}}) = \t{c}(\alpha
 \uhr {\min \t{P} })$.
Before proving Theorem~\ref{ovwth2}, we start with
the following simpler version.

\begin{theorem}\label{ovwth1}
For every $\ell\in\omega$, every computable instance  $c:2^{<\omega}\rightarrow
\ell$ of $\ovw{2}{\ell}$,
every $PA$ degree over $\emptyset''$ computes
a solution to $c$.
\end{theorem}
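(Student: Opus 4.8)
\textbf{Proof proposal for Theorem~\ref{ovwth1}.}

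The plan is to build the infinite ordered variable word $W$ by stages, using an $\emptyset''$-oracle to drive the construction and a $PA$ degree over $\emptyset''$ to resolve the choices that cannot be made computably in $\emptyset''$. At each stage we have an initial segment $\t\rho$ of the eventual variable word, consisting of fixed bits together with a finite prefix of variable blocks already committed; we then invoke Lemma~\ref{ovwprop1} (relativized appropriately), which is computable in $\t{c}\,'$ uniformly in $\rho$, to produce an extension $\t\rho' \succ \t\rho$ and a finite set $P \subseteq \{|\t\rho|,\dots,|\t\rho'|-1\}$ such that $(\t\rho',P)$ is $\t{c}$-valid: for every $\sigma \succeq \t\rho'$ there are $P' < \t P \subseteq P$ witnessing validity for $\sigma$. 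The key point of Lemma~\ref{ovwprop1} is not just existence but that the witnessing pair $(P',\t P)$ depends only on finitely much information about $\sigma$ — in fact only on the colors $\ell_0,\dots,\ell_{|L|}$, which are determined once we know $\sigma$ on the finitely many flipped positions. So the correct block to add to the variable word is $\t P$ (the positions where the next variable $x_i$ is placed), and $P'$ records positions that must be left as fixed $0$-bits; we set the next variable block of $W$ to be $\t P$, freeze $P'$ as zeros, and recurse with $\rho$ the appropriate extension. Iterating produces an infinite ordered variable word $W$ together with the sequence of its variable blocks.

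Next I would verify that $W$ is a solution, i.e.\ that $\{W(\bar a) : \bar a \in 2^{<\omega}\}$ is $c$-monochromatic. Consider any valuation $\bar a = a_0 a_1 \cdots a_{k-1}$; the string $W(\bar a)$ is obtained from the fixed bits of $W$ by flipping the $i$-th variable block exactly when $a_i = 1$ and truncating before block $k$. Within the stage-$n$ data $(\t\rho_n, P_n)$, flipping the $n$-th variable block corresponds precisely to replacing a string $\sigma \succeq \t\rho_n$ by $\sigma_{\t P_n}$ on the relevant coordinates, so by $\t{c}$-validity the color of the truncation at the start of block $n$ equals the color of the truncation at block $n+1$ regardless of whether that block is flipped; the $P'$-coordinates are already $0$ both before and after, matching the "fixed" reading. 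Chaining these equalities along $n = 0,1,\dots,k-1$ shows that $c(W(\bar a))$ is independent of $\bar a$, hence equals $c(\varepsilon)$, and monochromaticity follows. The bookkeeping here is the routine part: one has to be careful that the set $P'$ produced at stage $n$ consists of positions inside the fixed part that were already $0$, which is guaranteed because item~(2) of Lemma~\ref{ovwclaim1} ensures $\t\rho(i)=0$ for all $i \in P$, and one chooses the variable blocks of $W$ among the remaining positions.

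The main obstacle is the oracle accounting: Lemma~\ref{ovwprop1} is computable in $\t{c}\,'$, and in the iteration the coloring $\t{c}$ at stage $n+1$ is the original $c$ but the \emph{parameter} $\rho$ grows; the stage-$n$ choice of which extension $\t\rho_n$ and set $P_n$ to use requires querying $c' = \emptyset'$, and determining, for the \emph{next} stage, a sufficiently long $\sigma$ that is itself $\t{c}$-valid again requires another jump — so naively each stage costs one more jump and the construction would need $\emptyset^{(\omega)}$. The fix, and the reason a $PA$ degree over $\emptyset''$ suffices rather than just $\emptyset''$ itself, is to phrase the stage transition as: "given $(\t\rho_n,P_n)$, find \emph{some} $\t{c}$-valid $(\t\rho_{n+1},P_{n+1})$ with $\t\rho_{n+1}\succeq\t\rho_n 0$ and $|P_{n+1}| < \ell^{\ell+2}$". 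By the bound in Lemma~\ref{ovwprop1} this is a search with a $\Pi^{0,\emptyset'}_1$ (equivalently $\emptyset''$-decidable bounded-in-witness-size) verification, so the set of such valid pairs forms, uniformly, an infinite $\emptyset'$-computably-bounded tree whose paths encode the whole sequence of stages; a $PA$ degree over $\emptyset''$ — equivalently, something computing a path through every infinite $\emptyset''$-computable binary tree, but here it is even an $\emptyset''$-computable object since validity of a given finite pair is $\emptyset''$-decidable — lets us pick a coherent branch and hence the full $W$ with a single oracle, without iterating jumps. Once Theorem~\ref{ovwth1} is in place, Theorem~\ref{ovwth2} is obtained by the same argument after replacing the two-step validity search by the sharper one-step version, reducing the oracle from $\emptyset''$ to $\emptyset'$.
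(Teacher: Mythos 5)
There is a genuine gap, and it is precisely at the point you flag as "the key point of Lemma~\ref{ovwprop1}."

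You assert that the witnessing pair $(P',\t{P})$ for $\sigma\succeq\t\rho$ "depends only on finitely much information about $\sigma$ --- in fact only on the colors $\ell_0,\dots,\ell_{|L|}$, which are determined once we know $\sigma$ on the finitely many flipped positions." This is false. In the proof of Lemma~\ref{ovwprop1}, the colors are $\ell_0=\t{c}(\sigma)$ and $\ell_{i+1}=\t{c}(\sigma_{\{p_0,\dots,p_i\}})$: each is the value of $\t{c}$ on a string of the same length as $\sigma$, so each depends on \emph{all} of $\sigma$, not merely on its restriction to $P$. The coloring $\t{c}$ is an arbitrary finite coloring of $2^{<\omega}$, not a cylinder coloring. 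Consequently, the witness $(P',\t{P})$ --- and hence the split of $P$ into "keep-as-$0$" positions and "next variable block" positions --- is not something you can determine at the stage where you introduce $P$: it depends on the entire infinite word you have not yet built. Your greedy construction ("set the next variable block of $W$ to be $\t{P}$, freeze $P'$ as zeros, and recurse") therefore cannot get off the ground.

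Your attempted repair --- replacing the greedy construction by a tree of "$\t{c}$-valid pairs $(\t\rho_{n+1},P_{n+1})$" --- does not address this, because that tree records the wrong information. The sequence $(\t\rho_n,P_n)$ is in fact fixed $\emptyset'$-computably once and for all; no PA degree is needed for it. What is missing from any such path is how each $P_n$ splits into $P'_n$ and $\t{P}_n$. The paper resolves exactly this difficulty by introducing the \emph{iterated colorings} $c_0,c_1,\dots$: at stage $i+1$ the coloring $c_{i+1}(\sigma)$ records, as a tuple $\langle P',\t{P},j\rangle$, a witness of $c_i$-validity for $\sigma$ together with the \emph{previous} coloring's value, so that $c_n(\sigma)$ encodes (by unrolling the $\lhd$-chain of predecessors) a compatible chain of splits all the way back to a base color $\t\ell$. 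Claim~\ref{ovwclaim2} is the induction that shows this chain yields the desired monochromaticity. The tree $\mcal{T}$ is built from these chains, not from valid pairs; it is $\emptyset''$-computable and $\emptyset'$-computably bounded, and a path picks a coherent infinite chain, which in turn determines the $(P'_n,\t{P}_n)$ and hence $W$. That chaining machinery is the heart of the proof and is absent from your proposal.

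One smaller issue worth noting: in your intended greedy step you "recurse with $\rho$ the appropriate extension," i.e.\ apply Lemma~\ref{ovwprop1} repeatedly to the original $c$. The paper instead reapplies the lemma to the new, packaged coloring $c_{i+1}$ rather than to $c$; this is again essential for the chain condition to make sense.
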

\begin{proof}
It suffices to compute, given a PA degree relative to $\emptyset''$,
an infinite binary sequence $Y\in 2^\omega$
together with a sequence of finite
sets $\t{P}_0<\t{P}_{1}<\cdots$ with
$(\forall i\in\omega)(\forall n\in \t{P}_i)
[Y(n) = 0]$ such that the following holds:
\begin{quote}
Let $Position = \big\{\min \t{P}_i : i \geq 1 \big\}$.
There is some $\t{\ell}<\ell$ such that for all subset $J$ of $\omega$, letting
 $\t{P}_J = \bigcup\limits_{i\in J}\t{P}_i$, then we have,
 $
(\forall p\in Position)\big[
c(Y_{\t{P}_J}\uhr p) = \t{\ell}\ ].
$ 	
\end{quote}

Using Lemma~\ref{ovwprop1}, we first construct
a $\emptyset'$-computable sequence of strings
 $\t{\rho}_0 \prec \t{\rho}_1 \prec\cdots$,
a sequence of finite sets $P_i\subseteq \big\{
|\t{\rho}_{i-1}|,\cdots, |\t{\rho}_i|-1
\big\}$ and a sequence of colorings $c_i:[\t{\rho}_i]^\preceq\rightarrow
L_i$ inductively as follows.
$\t{\rho}_0 = \varepsilon$ and $c_0 = c$.
Given $\t{\rho}_i$ and $c_i:[\t{\rho}_i]^\preceq\rightarrow
L_i$, let $\t{\rho}_{i+1} \succeq \t{\rho}_i$
and $P_i\subseteq
\big\{ |\t{\rho}_i|,\cdots,|\t{\rho}_{i+1}|-1\big\}$ be such that
$(\t{\rho}_{i+1}, P_i)$- is $c_i$-valid,
and let $c_{i+1}$ be the coloring of $[\t{\rho}_{i+1}]^{\preceq}$ which on $\sigma \succeq \t{\rho}_{i+1}$
associates $\langle P', \t{P}, j \rangle$ such that
$(P',\t{P})$ witnesses $c_i$-validity of $(\t{\rho}_{i+1}, P_i)$
for $\sigma$, and $c_i(\sigma_{P'}) = j$. If there are multiple such tuples, take the least one, in some arbitrary order. Note that the range of $c_i$ is some finite set $L_i$.

We now analyze for $\sigma\succeq \t{\rho}_i$
 what $c_i(\sigma)= \langle P',\t{P},j \rangle$ means.
Note that elements of $L_i,i\in\omega$
admit a natural partial order $\lhd$ as follows:
for
$\langle P'_{0},\t{P}_0,j_0 \rangle\in L_{i},
\langle P'_{1},\t{P}_1,j_1\rangle \in L_{i+1}$,
$\langle P'_{1},\t{P}_1,j_1 \rangle $ is an immediate
successor of $\langle P'_{0},\t{P}_0,j_0 \rangle$
if and only if $j_1 = \langle P'_{0},\t{P}_0,j_0\rangle$.
Clearly every $j\in L_i$ admit a unique
immediate predecessor.

\begin{claim}\label{ovwclaim2}
Fix some $n \geq 1$, and let
$
\t{\ell} \lhd \langle P'_0,\t{P}_0,j_0\rangle \lhd \dots \lhd \langle P'_{n-1},\t{P}_{n-1},j_{n-1}\rangle = c_n(\sigma)
$,
Let $P' = \bigcup_{i\leq n-1}P'_i$ and
$\alpha = \sigma_{P'}$.
Then for any subset $J$ of $\{0,\cdots, n-1\}$,
$$(\forall p\in \big\{ \min \t{P}_j : 1 \leq j \leq n-1\big\}\cup\{|\alpha|\})
\big[\ c(\alpha_{\t{P}_J}\uhr p)
 = \t{\ell}\ \big].$$
\end{claim}
\begin{proof}
First we prove the claim for $n=1$.
By definition of $c_1(\sigma) = \langle P'_0,\t{P}_0,j_0 \rangle$,
letting $\beta = \sigma_{P'_0}$,
$c_0(\beta) = c_0(\beta_{\t{P}_0})= j_0=\t{\ell}$.
In other words,
 for any subset $J\subseteq \{0\}$,
$$(\forall p\in \big\{ \min\{\t{P}_j : 1 \leq j \leq 0\}\big\} \cup\{|\beta|\})
\big[\ c(\beta_{\t{P}_J}\uhr p)
 = \t{\ell}\ \big].$$
So the claim holds for $n=1$.
Suppose now the claim
 holds for $n-1$.

Suppose $c_n(\sigma) = \langle P'_{n-1},\t{P}_{n-1},j_{n-1} \rangle$. Let $\beta = \sigma_{P'_{n-1}}$. We have $c_{n-1}(\beta) =c_{n-1}(\beta_{\t{P}_{n-1}}) = c_{n-1}(\beta\uhr \min \t{P}_{n-1} ) = j_{n-1} = \langle P'_{n-2},\t{P}_{n-2},j_{n-2}\rangle$. As $c_{n-1}(\beta) = \langle P'_{n-2},\t{P}_{n-2},j_{n-2}\rangle$ and as $\t{\ell} \lhd \langle P'_{n-2},\t{P}_{n-2},j_{n-2}\rangle$, by induction hypothesis, for any subset $J$ of $\{0,\cdots,n-2\}$ we have:
\begin{align}\label{ovweq1}
&c(\beta_{(\cup_{i\leq n-2} P_i')\cup \t{P}_J}) = \t{\ell}.
\end{align}

Let $\beta' = \beta_{\t{P}_{n-1}}$. As $c_{n-1}(\beta') = \langle P'_{n-2},\t{P}_{n-2},j_{n-2}\rangle$ and as $\t{\ell} \lhd \langle P'_{n-2},\t{P}_{n-2},j_{n-2}\rangle$, by induction hypothesis, for any subset $J$ of $\{0,\cdots,n-2\}$ we have:
\begin{align}\label{ovweq2}
&c(\beta'_{(\cup_{i\leq n-2} P_i')\cup \t{P}_J}) = \t{\ell}.
\end{align}

As $c_{n-1}(\beta\uhr \min \t{P}_{n-1} ) = \langle P'_{n-2},\t{P}_{n-2},j_{n-2}\rangle$ and as $\t{\ell} \lhd \langle P'_{n-2},\t{P}_{n-2},j_{n-2}\rangle$, by induction hypothesis, for any subset $J$ of $\{0,\cdots,n-2\}$ we have:
\begin{align}\label{ovweq4}
&(\forall p \in\big\{ \min \t{P}_j : 1\leq j\leq n-2\big\}\cup \big\{|\beta\uhr \min\t{P}_{n-1}|\big\})
\big[\ c(\beta_{(\cup_{i\leq n-2} P_i')\cup\t{P}_J}\uhr p) = \t{\ell}\ \big].
\end{align}
But $|\beta\uhr \min\t{P}_{n-1}| = \min \t{P}_{n-1}$. So (\ref{ovweq4}) means for any subset $J$ of $\{0,\cdots,n-2\}$ we have:
\begin{align}\nonumber
&(\forall p \in\big\{ \min \t{P}_j : 1\leq j\leq n-1\big\})
\big[\ c(\beta_{(\cup_{i\leq n-2} P_i')\cup\t{P}_J}\uhr p) = \t{\ell}\ \big].
\end{align}
Or equivalently, for any subset $J$ of $\{0,\cdots,n-1\}$ we have:
\begin{align}\label{ovweq3}
&(\forall p \in\big\{ \min \t{P}_j : 1\leq j\leq n-1\big\})\big[\ c(\beta_{(\cup_{i\leq n-2} P_i')\cup\t{P}_J}\uhr p) = \t{\ell}\ \big].
\end{align}

Now from \ref{ovweq1}, \ref{ovweq2} and \ref{ovweq3} we deduce that for any subset $J$ of $\{0,\cdots,n-1\}$ we have:
$$(\forall p\in \big\{ \min \t{P}_j : 1 \leq j \leq n-1\big\}\cup\{|\beta|\})\big[\ c(\beta_{(\cup_{i\leq n-2} P_i')\cup\t{P}_J}\uhr p) = \t{\ell}\ \big]$$
which completes the proof of the claim since
$\beta_{\cup_{i\leq n-2} P_i'} = \alpha$.

\end{proof}

Let $\mcal{T}_0$ be the $\emptyset'$-computable set of all $\gamma$ such that
$(\forall i\leq |\gamma|)[\gamma(i)\in L_i]$,
$\gamma(i) \lhd \gamma(i+1)$ and $\gamma(|\gamma|-1) = c_{|\gamma|-1}(\t{\rho}_{|\gamma|})$.
Then, let $\mcal{T}$ be the downward closure of the set $\mcal{T}_0$
by the prefix relation. The tree $\mcal{T}$ is infinite by construction of the strings $\t{\rho_i}$, the colors $c_i$ and the sets $P_i$ : a witness for the $c_i$-validity of $(\t{\rho}_{i+1}, P_{i+1})$ for $\rho_{i+1}$ yields a node of $\mcal{T}_0$ of length $i+2$. The tree $\mcal{T}$ is also $\emptyset'$-computably bounded, and $\emptyset''$-computable.
Let $j_0*\langle P'_0,\t{P}_0,j_0\rangle *\langle P'_1,\t{P}_1,j_1\rangle *\cdots$
be an infinite path through $\mcal{T}$ computed by any PA degree over $\emptyset''$.
By construction, $\langle P'_i,\t{P}_i,j_i\rangle \lhd \langle P'_{i+1},\t{P}_{i+1},j_{i+1} \rangle$.
Let $X = \bigcup_{i\in\omega}\t{\rho}_i$,
$P' = \bigcup_{i\in\omega} P'_i$
and let $Y = X_{P'}$.
Clearly  $(\forall i \forall n\in \t{P}_i)[Y(n) = 0]$
and $Y$ is computable in the given PA degree relative to $\emptyset''$.
Therefore,
letting $Position = \big\{\min \t{P}_i : i \geq 1\big\}$, it suffices
to show that
for all subsets $J$ of $\omega$,
$$
(\forall p\in Position)\big[
c(Y_{\t{P}_J}\uhr p) = j_0\ ].
$$

Without loss of generality,
suppose $p = \min \t{P}_n$
and $J\subseteq \{0,\cdots,n-1\}$.
Since $j_0*\langle P'_0,\t{P}_0,j_0\rangle*\langle P'_1,\t{P}_1,j_1\rangle*\cdots
 \langle P'_n,\t{P}_n,j_n\rangle$ is an initial segment of some element in
 $\mathcal{T}_0$,
there must exist some
$N>n$
 such that
 $c_N(\t{\rho}_{N+1}) = \langle P'_{N-1},\t{P}_{N-1},j_{N-1}\rangle$.
 Let $\sigma = \t{\rho}_{N+1}, \alpha =\sigma_{P'}$. Clearly
 $\alpha\prec Y\wedge |\alpha|>p$. Moreover,
 by Claim~\ref{ovwclaim2},
 $c(\alpha_{\t{P}_J}\uhr p) = j_0$.
 Thus $c(Y_{\t{P}_J}\uhr p) = j_0$.

\end{proof}

Finally, we slightly modify the proof of Theorem~\ref{ovwth1}
to derive Theorem~\ref{ovwth2}.

\begin{proof}[Proof of Theorem~\ref{ovwth2}]
The main point is to make the tree
$\mcal{T}$ $\emptyset'$-computable.
To ensure this, after we obtain
$\t{\rho}_i,c_i$, we do not
directly go to $\t{\rho}_{i+1}$.
Instead, we $\emptyset'$-compute
$\t{\rho}_i^{0}\prec\t{\rho}_i^1\prec\cdots
\prec\t{\rho}_i^{r_i}$
such that $\t{\rho}_i^0\succ\t{\rho}_i$ and
$c_i\big(
\{\tau:\tau\succeq \t{\rho}_i^{r_i} \}\big)
 \subseteq c_i\big(\big\{\t{\rho}_i^0,\cdots,\t{\rho}^{r_i}_i \big\}
 \big)$. Then we $\emptyset'$-compute $\t{\rho}_{i+1}\succ
 \t{\rho}_i^{r_i}$ as in the proof of Theorem~\ref{ovwth1}.
 Note that this indeed can be
 achieved using $\emptyset'$ since
 $c_i$ is computable.
 Define $\mcal{T}$ to be
 the set of all $\gamma$ such that
 $(\forall i\leq |\gamma|)[\gamma(i)\in L_i]$,
$\gamma(i) \lhd \gamma(i+1)$,
 and either $|\gamma| = 1\wedge \gamma\in L_0$ or
 there exists $\t{\rho}_{|\gamma|-1}^u$
 with $c_{|\gamma|-1}(\t{\rho}_{|\gamma|-1}^u)
 =\gamma(|\gamma|-1)$.
 It is easy to see
that $\mcal{T}$ is $\emptyset'$-computable
since $c_i$ is computable for all $i$ and
the sequences $\langle c_i : i \in \omega \rangle$ and
$\langle \t{\rho}_i^v : i\in\omega, v\leq r_i \rangle$
are $\emptyset'$-computable.

Now we show that $\mcal{T}$ is a tree.
Suppose $\gamma\in \mcal{T}$,
$|\gamma| = n+1$ with $n \geq 1$, and  $c_n(\t{\rho}_n^u)
 =\gamma(n) = \langle P',\t{P},j \rangle \in L_n$.
 We claim that $\gamma \uhr n \in \mcal{T}$.
 If $n = 1$, then $\gamma \uhr 1 \in L_0 \subseteq \mcal{T}$.
 Otherwise, let
$\langle Q',\t{Q},k \rangle \in L_{n-1}$ be the predecessor
  of $\langle P',\t{P},j \rangle$.
We need to show that
there exists $\t{\rho}_{n-1}^v$ such that
$c_{n-1}(\t{\rho}_{n-1}^v) = \langle Q',\t{Q},k \rangle$.
$c_n(\sigma) = \langle P',\t{P},j \rangle$ implies
that, letting $\alpha = \sigma_{P'}$,
$c_{n-1}(\alpha) = c_{n-1}(\alpha\uhr \min \t{P})
 = j = \langle Q',\t{Q},k \rangle$.
Note that $\alpha\succeq \t{\rho}_{n-1}^{r_{n-1}}$
since $P'> |\t{\rho}_{n-1}^{r_{n-1}}|$.
But $c_{n-1}\big( \{\tau:\tau\succeq \t{\rho}_{n-1}^{r_{n-1}}\}\big)
\subseteq c_{n-1}\big(\{\t{\rho}_{n-1}^0,\cdots,\t{\rho}_{n-1}^{r_{n-1}}\} \big)$.
Therefore there exists $\t{\rho}_{n-1}^u$ such that
$c_{n-1}(\t{\rho}_{n-1}^u) = \langle Q',\t{Q},k \rangle$.
It follows that $\gamma \uhr n \in \mcal{T}$ and that $\mcal{T}$ is a tree.
Any PA degree relative to $\emptyset'$ computes an infinite path through $\mcal{T}$. The rest of the proof goes exactly the same
 as Theorem~\ref{ovwth1}.
\end{proof}

We now give an alternative proof of Theorem~\ref{ovwth2} based on the definitional complexity of the solutions of~$c$.

\begin{proof}[Second proof of Theorem~\ref{ovwth2}]
Let $P_0, P_1, \dots$ be the $\emptyset'$-computable sequence defined in the proof of Theorem~\ref{ovwth1}. We have seen that there exists an infinite ordered variable word such that the $n$th variable kind appears before the position $\max P_n$. Let $\mcal{T}$ be the tree of all finite ordered variable words which are finite solutions to $c$ and such that the $n$th variable appears before the position $\max P_n$. By the previous observation, the tree is infinite, $\emptyset'$-computable, and $\emptyset'$-computably bounded. Any PA degree relative to $\emptyset'$ computes an infinite variable word which, by construction of $\mcal{T}$, is a solution to $c$. This completes the proof of Theorem~\ref{ovwth2}.
\end{proof}

Note that the above proof can be slightly modified to obtain a proof of a sequential version of the ordered variable word.

\begin{statement}
$\mathsf{Seq}\ovw{n}{\ell}$ is the statement ``If $c_0, c_1, \dots$ is a sequence of $\ell$-colorings of a fixed alphabet $A$ of cardinality $n$, there exists a variable word $W$ such that for every $i \in \omega$ and every $\bar b \in A^i$, $\{W(\bar b\bar a) : \bar a \in A^{<\infty}\}$ is monochromatic for $c_i$.''
\end{statement}

\begin{theorem}
For every computable instance $c_0, c_1, \dots$ of $\mathsf{Seq}\ovw{2}{\ell}$,
every PA degree relative to $\emptyset'$ computes a solution to $\bar c$.
\end{theorem}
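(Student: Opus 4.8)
The plan is to mimic the second proof of Theorem~\ref{ovwth2} (the one based on the definitional complexity of solutions), taking advantage of the fact that the sequence of sets $P_0, P_1, \dots$ produced there is $\emptyset'$-computable and, crucially, depends only on the iterated validity construction, not on the particular witness path chosen. The key observation is that $\mathsf{Seq}\ovw{2}{\ell}$ with a single fixed variable word $W$ can be reduced, stage by stage, to a tree-path problem where at the $i$th level we must simultaneously respect the combinatorial constraints imposed by \emph{all} colorings $c_0, \dots, c_i$ that could be relevant when the first $i$ variable kinds have been instantiated by some $\bar b \in A^i$.

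First I would set up the analogue of Lemma~\ref{ovwprop1} for a finite family of colorings. Given colorings $d_0, \dots, d_{m-1} : 2^{<\omega} \to \ell$, applying Lemma~\ref{ovwprop1} successively (each time extending $\t\rho$ and enlarging the allowed-position set) yields a single string $\t\rho \succ \rho$ and a finite set $P$, both $\emptyset'$-computable uniformly in $\rho$ and in the indices, such that $(\t\rho, P)$ is simultaneously $d_k$-valid for every $k < m$. This is because the validity property of Lemma~\ref{ovwprop1} is preserved under extending $\t\rho$ and enlarging $P$ — so one handles $d_0$, then extends to handle $d_1$, and so on. Next, I would iterate exactly as in the proof of Theorem~\ref{ovwth1}: build a $\emptyset'$-computable sequence $\t\rho_0 \prec \t\rho_1 \prec \cdots$ together with position sets $P_i$, where at stage $i$ the colorings to be made valid are $c_0, \dots, c_i$ (restricted appropriately, via the compositional colorings $c_{i+1}$ of the earlier proofs, which now become tuples encoding validity witnesses for the whole finite family). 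The tree $\mcal{T}$ of finite ordered variable words $W$ which are finite solutions to \emph{each} $c_j$ for the already-revealed valuations $\bar b$ of length $\le$ (the number of variables so far), and in which the $n$th variable kind appears before position $\max P_n$, is then infinite (a witness path through the iterated construction gives nodes of $\mcal{T}$ at every level), $\emptyset'$-computable, and $\emptyset'$-computably bounded.

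Finally, any PA degree relative to $\emptyset'$ computes an infinite path through $\mcal{T}$, and by construction this path is an infinite ordered variable word $W$ such that for every $i$ and every $\bar b \in A^i$, the set $\{W(\bar b\bar a) : \bar a \in A^{<\infty}\}$ is monochromatic for $c_i$; this is precisely a solution to $\bar c$. The main obstacle I anticipate is bookkeeping: one must verify that the finitely-many-colorings version of the Claim~\ref{ovwclaim2} argument still goes through, i.e. that the color $\t\ell$ pinned down at the bottom of the $\lhd$-chain is the correct monochromatic color for $c_i$ on \emph{every} valuation prefix $\bar b$ of the appropriate length, not just for the valuation that happens to correspond to the flips $P'$ chosen along the path. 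This requires observing that the combinatorial conclusion of Lemma~\ref{ovwprop1} — that $\t c(\alpha) = \t c(\alpha_{\t P}) = \t c(\alpha \uhr \min \t P)$ — holds for \emph{all} $\sigma \succeq \t\rho$, and hence applies uniformly to all extensions corresponding to all future valuations of the later variables; the uniformity over valuations of the \emph{earlier} variables is then handled exactly as in Theorem~\ref{ovwth1}, where the flips in $\t P_J$ range over arbitrary subsets $J$. Since everything is uniform and the bound $|P| < \ell^{\ell+2}$ from Lemma~\ref{ovwprop1} controls the branching, the passage from the single-coloring case costs nothing beyond this verification.
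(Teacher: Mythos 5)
Your overall shape is right (iterate a validity lemma, build a $\emptyset'$-computable tree, use a PA degree over $\emptyset'$ to extract a path), but the mechanism you propose for handling the sequence of colorings misses the one move the paper actually relies on, and that gap sits exactly where the ``bookkeeping'' you flag lives. Applying Lemma~\ref{ovwprop1} ``successively'' to $c_0, \dots, c_i$ only yields, for each $\sigma$ and each $k\le i$, a \emph{separate} validity witness $(P'_k, \t P_k) \subseteq P$; nothing forces the $\t P_k$'s to coincide across $k$, so there is no single block to use as the next variable-occurrence set, and the resulting tree of ``finite solutions to each $c_j$'' has no reason to be infinite. The paper instead keeps one composite coloring $d_i$ and, at stage $i+1$, defines $d_{i+1}(\sigma) = \langle P', \t P, j, k\rangle$ where $(P',\t P)$ is the $d_i$-validity witness, $j = d_i(\sigma_{P'})$, and — this is the crucial extra coordinate — $k = c_{i+1}(\sigma_{P'})$. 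A \emph{single} application of Lemma~\ref{ovwprop1} to $d_{i+1}$ then produces a \emph{single} witness $(Q',\t Q)$ whose validity equation $d_{i+1}(\beta) = d_{i+1}(\beta_{\t Q}) = d_{i+1}(\beta\uh\min\t Q)$ simultaneously stabilizes the $k$-coordinate (so $c_{i+1}$) and, by unfolding $j$ recursively, all the earlier colorings. That is what coordinates the variable-occurrence sets across the whole family; ``simultaneous validity'' obtained by repeated applications of the lemma does not. (Applying the lemma once to the product coloring $\sigma \mapsto (d_i(\sigma), c_{i+1}(\sigma))$ would be in the right spirit, but that is not what ``successively extending $\t\rho$ and enlarging $P$'' does.)

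One smaller point: what needs checking is not that a single $\t\ell$ ``is the correct monochromatic color for $c_i$ on every valuation prefix $\bar b$.'' The statement $\mathsf{Seq}\ovw{2}{\ell}$ allows the homogeneous color of $\{W(\bar b\bar a): \bar a\}$ under $c_i$ to depend on $\bar b$; what must be shown is that for each fixed $\bar b$ the $c_i$-color is independent of $\bar a$ and of the truncation point. In the paper's setup this is precisely the stability of the new $k$-coordinate along $\lhd$-chains once $c_{i+1}$ is folded into $d_{i+1}$; in your plan there is no coordinate carrying that information, which is why the Claim~\ref{ovwclaim2}-style verification you defer cannot go through as stated.
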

\begin{proof}
	The proof is similar to Theorem~\ref{ovwth2}. Using Lemma~\ref{ovwprop1}, we first construct a $\emptyset'$-computable sequence of strings
 $\t{\rho}_0 \prec \t{\rho}_1 \prec\cdots$,
a sequence of finite sets $P_i\subseteq \big\{
|\t{\rho}_{i-1}|,\cdots, |\t{\rho}_i|-1
\big\}$ and a sequence of colorings $d_i:[\t{\rho}_i]^\preceq\rightarrow
L_i$ inductively as follows.
$\t{\rho}_0 = \varepsilon$ and $d_0 = c_0$.
Given $\t{\rho}_i$ and $d_i:[\t{\rho}_i]^\preceq\rightarrow
L_i$, let $\t{\rho}_{i+1} \succeq \t{\rho}_i$
and $P_i\subseteq
\big\{ |\t{\rho}_i|,\cdots,|\t{\rho}_{i+1}|-1\big\}$ be such that
$(\t{\rho}_{i+1}, P_i)$- is $d_i$-valid,
and let $d_{i+1}$ be the coloring of $[\t{\rho}_{i+1}]^{\preceq}$ which on $\sigma \succeq \t{\rho}_{i+1}$
associates $\langle P', \t{P}, j, k \rangle$ such that
$(P',\t{P})$ witnesses $d_i$-validity of $(\t{\rho}_{i+1}, P_i)$
for $\sigma$, $d_i(\sigma_{P'}) = j$ and $c_{i+1}(\sigma_{P'}) = k$. Note that the main difference with the previous construction is that we handle more and more colorings among $c_0, c_1, \dots$ at each level. The remainder of the proof is the same as in Theorem~\ref{ovwth2}.
\end{proof}

The theorem above is optimal, in that we can obtain the following reversal.

\begin{theorem}
There is a computable instance $c_0, c_1, \dots$ of $\mathsf{Seq}\ovw{2}{2}$,
such that every solution is of PA degree relative to $\emptyset'$.
\end{theorem}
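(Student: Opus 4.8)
The plan is to build a computable sequence of $2$-colorings $c_0, c_1, \dots$ of $2^{<\omega}$ such that any $\mathsf{Seq}\ovw{2}{2}$-solution $W$ is forced, level by level, to encode an infinite path through a fixed $\emptyset'$-computable infinite binary tree $T$ with no $\emptyset'$-computable path (so that the path, hence $W$, has PA degree over $\emptyset'$). The standard obstacle for lower bounds of this kind is that a variable word carries two kinds of data at each stage: the \emph{positions} where variables occur, and the \emph{fixed-bit pattern} in between; and for the ordered variable word the solver also gets to choose how many positions each variable spans. So the colorings must be designed so that the only information the solver can ``freely'' control is a single bit per variable-level, and that bit is pinned down to agree with a node of $T$.

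Concretely, I would interleave a $\emptyset'$-coding of $T$ into the colorings as follows. Fix a $\emptyset'$-computable enumeration $T = \bigcup_s T_s$ of an infinite, $\emptyset'$-computably bounded (indeed binary) tree with no $\emptyset'$-computable infinite path; such a $T$ exists because $\emptyset''$ computes a DNC function relative to $\emptyset'$, equivalently a path through a suitable $\emptyset'$-computable binary tree, but $\emptyset'$ does not. The coloring $c_i$ will look at a putative finite solution up to its $(i{+}1)$st variable, extract from the first $i{+}1$ variable-kinds a binary string $\tau$ of length $i{+}1$ (reading off, for each $j \le i$, a canonical bit determined by the value the solver assigns to variable $x_j$ on the first valuation — e.g. the parity of the position where $x_j$ first occurs, or more robustly a bit we hard-wire by making $c_i$ nonconstant on exactly one well-chosen pair of extensions), and output a color that is ``good'' only if $\tau \in T$ (using that membership in $T$ is, level-by-level, approximable from $\emptyset'$ and can be folded into the coloring by a finite-injury/movable-marker construction that is \emph{computable} because each $c_i$ is a genuine computable function once the finite approximation to $T$ it depends on has settled). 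The monochromaticity requirement of $\mathsf{Seq}\ovw{2}{2}$ on $c_i$ then forces the extracted bit string of $W$ to lie on $T$ cofinally, hence to be an infinite path through $T$.

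The key steps, in order: (1) set up the correspondence between an infinite ordered variable word and an infinite binary sequence, making the ``variable-bit extraction'' map from finite solutions to strings totally computable and monotone; (2) fix $T$ as above and verify $\emptyset'$ does not compute a path; (3) define $c_i$ uniformly and computably so that the homogeneous color for $c_i$ on the class $\{W(\bar b\bar a)\}$ witnesses, for every $\bar b \in 2^i$, that the length-$i$ prefix of the extracted string of $W$ sits in $T$ — here the ``sequential'' feature is essential, since a single coloring cannot pin down infinitely many successive bits, but the $i$th coloring handling the $i$th new constraint suffices; (4) conclude that $W$ computes a path through $T$, hence has PA degree over $\emptyset'$, matching the upper bound of the previous theorem so that equality of degrees holds on the nose.

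The main obstacle I anticipate is step (3): ensuring that a solver cannot \emph{evade} the coding by using a degenerate variable word — e.g. making some variable span a block on which the fixed bits already determine $c_i$ regardless of the valuation, or choosing the variable positions so that the ``extracted bit'' is not well-defined. The fix is to make the bit extraction depend only on data the solver is \emph{forced} to produce (a variable must occur at least once, and in an \emph{ordered} variable word all its occurrences precede the next variable), and to design $c_i$ so that on the relevant family of strings it is constant on each of the two candidate bit-classes and takes different values on them \emph{unless} forbidden by $T$; the monochromaticity of $\{W(\bar b\bar a):\bar a\}$ then rules out the forbidden branch. Verifying that this can be arranged while keeping every $c_i$ computable — i.e. that the $\emptyset'$-approximation to $T$ only ever affects $c_i$ through a finite amount of already-stabilized information — is the delicate bookkeeping, and is exactly where the proof of Theorem~\ref{ovwth2} (its use of the $\emptyset'$-computable tree $\mcal{T}$ and Lemma~\ref{ovwprop1}) is run ``in reverse'' to see that nothing more than a path through a $\emptyset'$-computable bounded tree is being demanded.
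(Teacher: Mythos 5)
Your proposal and the paper's proof diverge at the very first design decision, and I think the gap you flag at step (3) is real and not easily repairable along the lines you sketch. The trouble is that a coloring $c_i$ never gets to see the variable word $W$ directly; it only constrains the homogeneous class $\{W(\bar b\bar a) : \bar a\}$ for each $\bar b \in 2^i$. The ``extracted bit'' $e_i(W)$ you want (e.g.\ the parity of the position of $x_i$'s first occurrence) is a feature of $W$ itself, and the solver can choose the variable positions with complete freedom subject only to the ordering constraint. You would need the colorings to make some choices of position \emph{impossible}, but a coloring can only rule out a prospective $W$ by forcing one of the homogeneous classes to be non-monochromatic, which is a statement about infinitely many \emph{lengths}, not about the positions of individual variables. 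You identify this as the ``delicate bookkeeping,'' but you do not give a mechanism that actually closes it, and it is not clear one exists for the position-parity extraction you propose.

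The paper's construction sidesteps the extraction problem entirely by making each $c_i$ depend only on the length $|\sigma|$. Then for $\bar b \in 2^e$, the value $c_e(W(\bar b))$ depends only on $|W(\bar b)|$, which is the position of $x_e$'s first occurrence and hence independent of $\bar b$. Homogeneity forces the infinite set of lengths $\{|W(\bar b\bar a)| : \bar a\}$ to be $c_e$-monochromatic, and since $c_e$ is the indicator of a computable set $R_e$ designed so that $\Phi^{\emptyset'}_e(e){\downarrow}=0$ makes $R_e$ finite and $\Phi^{\emptyset'}_e(e){\downarrow}=1$ makes $R_e$ cofinite, the observed color tells you which side of $R_e$ is infinite. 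Reading that bit off $W$ gives a $\emptyset'$-DNC$_2$ function, hence PA over $\emptyset'$, with no tree-coding and no positional bookkeeping at all. That is a genuinely simpler idea than the one you propose: rather than forcing $W$ to encode a path through a chosen tree, it lets the monochromaticity requirement itself perform the diagonalization against $\Phi^{\emptyset'}_e(e)$. I'd recommend abandoning the position-extraction route and rebuilding around length-only colorings.
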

\begin{proof}
Let $R_0, R_1, \dots$ be a uniformly computable sequence of sets
such that for every $e$,
if $\Phi^{\emptyset'}_e(e) \downarrow = 0$ then $R_e$ is finite,
and if $\Phi^{\emptyset'}_e(e) \downarrow = 1$ then $R_e$ is cofinite.
In particular, any function $f : \omega \to 2$ such that $f(e)$ gives
a side of $R_e$ which is infinite, is DNC$_2$ relative to $\emptyset'$,
hence of PA degree relative to $\emptyset'$.
Let $c_i : 2^{<\infty} \to 2$ be defined by $c_i(\sigma) = 1$ iff $|\sigma| \in R_i$, and let $W$ be a solution to $\bar c$, that is, a variable word $W$ such that for every $i \in \omega$ and every $\bar b \in A^i$, $\{W(\bar b\bar a) : \bar a \in A^{<\infty}\}$ is monochromatic for $c_i$.
We claim that $W$ computes such a function $f$.
Given $e$, let $f(e) = c_e(W(\bar b))$, where $\bar b  \in 2^e$ is arbitrary (this is well-defined, since $c_e(\bar b)$ depends only on the length of $\bar b$). By definition of $W$, $\{W(\bar b\bar a) : \bar a \in A^{<\infty}\}$ is monochromatic for $c_e$, the color of $c_e(W(\bar b))$ appears infinitely often in $R_e$. Therefore, $W$ is of PA degree relative to $\emptyset'$. This completes the proof.
\end{proof}

\section{A difficult instance of the Ordered Variable Word theorem}\label{sect:ovw-lower-bounds}

Miller and Solomon~\cite{Miller2004Effectiveness} constructed a computable instance of $\ovw{2}{2}$ with no $\Delta^0_2$ solution. In this section, we strengthen their proof by constructing a computable instance of $\ovw{2}{2}$ such that every solution is of DNC degree relative to $\emptyset'$, using a significantly simpler argument.

The proof makes an essential use of a computable version of Lovasz Local Lemma proven by Rumyantsev and Shen~\cite{Rumyantsev2014Probabilistic}. The idea of using Lovasz Local Lemma to analyse the computability-theoretic strength of problems in reverse mathematics comes from Csima and Dzhafarov, Hirschfeldt, Jockusch, Solomon and Westrick~\cite{Csima2018reverse}, who proved that a version of Hindman's theorem for subtractions is not computably true.

\begin{definition}
	Fix a countable set of variables $x_0, x_1, \dots$
	A \emph{(disjunctive) clause} $C$ is a tuple of the form $(x_{n_1} = i_1 \vee \dots \vee x_{n_k} = i_k)$, with $i_1, \dots, i_k < 2$. The \emph{length} of $C$ is the integer $k$. An \emph{infinite CNF formula} is an infinite conjunction of disjunctive clauses. An infinite CNF formula $\bigwedge_n C_n$ is \emph{computable} if the function which given $n$ outputs a code for $C_n$ is computable, and the set of $n$ such that $C_n$ contains the variable $x_j$ is uniformly computable in $j$.
\end{definition}

\begin{theorem}[{Rumyantsev and Shen~\cite{Rumyantsev2014Probabilistic}}]\label{thm:lll-computable}
For every $\alpha \in (0,1)$, there exists some $N \in \omega$ such that every computable infinite CNF where each variable appears in at most $2^{\alpha n}$  clauses of size $n$ (for every n) and all clauses have size at least $N$, has a computable satisfying assignment.
\end{theorem}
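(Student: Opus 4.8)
The plan is to prove the theorem by an effective version of the Moser--Tardos algorithmic Lov\'asz Local Lemma, the quantitative engine being an \emph{asymmetric} local-lemma estimate in which a clause of size $m$ receives the weight $2^{-\beta m}$ for a parameter $\beta$ fixed once and for all with $\alpha < \beta < 1$; such a $\beta$ exists precisely because $\alpha < 1$, and $N$ is then taken large enough for all of the estimates below to go through at once.

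First I would check that the weighting works. Writing $C \sim C'$ when two clauses share a variable, the asymmetric LLL inequality at a clause $C_n$ of size $m \ge N$ reads $2^{-m} \le 2^{-\beta m} \prod_{C_{n'} \sim C_n} (1 - 2^{-\beta |C_{n'}|})$. By hypothesis, for each of the $m$ variables of $C_n$ and each $k \ge N$ at most $2^{\alpha k}$ clauses of size $k$ contain it, so taking logarithms the product is bounded below by $-O\!\big(m \sum_{k \ge N} 2^{(\alpha - \beta)k}\big)$; since $\beta > \alpha$ the geometric tail tends to $0$ as $N \to \infty$, so for $N$ large the inequality holds with room to spare. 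Classically this already gives that the satisfying assignments form a subset of $2^\omega$ of measure at least $\prod_n (1 - 2^{-\beta|C_n|}) > 0$; but the point of carrying the explicit weights is that the same numerics yield, via Kraft's inequality (using $\sum_{m \ge N} 2^{(\alpha - \beta)m} < 1$), a prefix-free code assigning to each clause of size $m$ a codeword of length at most $\beta m + O(1)$.

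Next I would run the Moser--Tardos resampling procedure ``online'': assign $x_0, x_1, \dots$ in order, and whenever the current partial assignment falsifies a clause $C$ all of whose variables have already been set, resample $C$ --- replace the values at the positions occurring in $C$ by fresh ones --- and recheck; a new bit is set only once no completed clause is violated. Two things must be verified. The first is computability of a single step: when $x_t$ is set, only clauses all of whose variables are $\le t$ can become violated, and by the size-$k$ sparsity bound there are at most $(t+1)\sum_{N \le k \le t+1} 2^{\alpha k}$ of them; combined with the hypothesis that $\{\, n : x_j \in C_n \,\}$ is uniformly computable, this must be turned into an actual computable bound on the clause indices one has to inspect at step $t$ --- bookkeeping, but genuine. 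The second is termination at each level, where one invokes Moser's entropy-compression argument, adapted to clauses of unbounded size through the codes above: in a run, the sequence of resampled clauses together with the current assignment determines the random choices made, and each resampling of a size-$m$ clause frees $m$ ``random'' bits of the assignment while costing only $\beta m + O(1)$ bits of log, a net saving of at least $(1-\beta)N - O(1) > 0$ bits per resampling for $N$ large; hence a long run compresses an incompressible string, so only finitely many resamplings ever touch a fixed position, the construction stabilizes bit by bit, and the limit is a satisfying assignment.

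The hard part is to make this last step produce a genuinely \emph{computable} assignment rather than one read off from an incompressible oracle, which a computable construction may not use. I would handle this by localizing: the resamplings that can ever affect positions $\le t$ involve only clauses from a finite family $\mathcal{F}_t$ made accessible to the algorithm by the bookkeeping above (the clauses with all variables $\le t$, closed under a bounded number of $\sim$-steps), and the asymmetric-LLL inequality restricted to $\mathcal{F}_t$ forces a systematic deterministic backtracking search over the finitely many bits occurring in the clauses of $\mathcal{F}_t$ to find an assignment satisfying all of $\mathcal{F}_t$ within a number of steps bounded by a computable function of $t$, the entropy-compression count (now applied to the deterministic search) forbidding any cycle. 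Running the online algorithm with this deterministic resampling rule then fixes each bit $x_t$ permanently within a computable time bound, giving an honestly computable satisfying assignment. Pinning down this localization, and in particular the non-cycling claim --- which is exactly where $\alpha < 1$ and the choice of $N$ are spent --- is the crux.
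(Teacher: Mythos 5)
This theorem is imported by the paper: it is stated as a quotation of Rumyantsev and Shen and used as a black box, so there is no in-paper proof to compare yours against. I can only measure your sketch against what the cited result actually requires.

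Your first two paragraphs are a sound account of the classical, probabilistic side: the choice of $\beta$ with $\alpha<\beta<1$, the verification that the asymmetric local-lemma condition holds once $N$ is large (the tail $\sum_{k\ge N}2^{(\alpha-\beta)k}$ is exactly the quantity that must be made small), and the entropy-compression bound showing that a Moser-style resampling procedure driven by an \emph{incompressible} seed resamples each position only finitely often. The gap is concentrated in your third paragraph, and it is not a bookkeeping gap: it is the entire content of the theorem, namely that the limit assignment is \emph{computable} rather than merely existent or computable from a random oracle. Two distinct things are missing as written. First, entropy compression as you invoke it constrains runs on incompressible seeds, which a computable construction cannot use; you cannot simply ``apply the entropy-compression count to the deterministic search,'' since a deterministic exhaustive search is maximally compressible. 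The counting must be redone as a statement that most seeds of a given finite length succeed, from which a computable bound on the search time is extracted; you gesture at this but do not carry it out. Second, and more seriously, producing for each $t$ an assignment satisfying the finite family $\mathcal{F}_t$ does not yield a computable satisfying assignment of the whole CNF: the assignments found for $\mathcal{F}_t$ and $\mathcal{F}_{t+1}$ may disagree on $x_0$, and nothing you say rules out this happening infinitely often. The satisfying assignments form a $\Pi^0_1$ class, and a nonempty, even positive-measure, $\Pi^0_1$ class need not contain a computable point, so some quantitative stabilization claim --- that once the local search succeeds on $\mathcal{F}_t$ the values of $x_0,\dots,x_t$ never need revision, or are revised a computably bounded number of times --- is indispensable. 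That claim is precisely what the Rumyantsev--Shen argument supplies, via a layered organization of the resampling together with a compressibility bound on how far corrections can propagate backwards; until you prove it, the ``crux'' you flag is an open hole rather than a detail to be pinned down.
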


\begin{theorem}\label{thm:ovw-delta2}
There is a computable instance $c$ of $\ovw{2}{2}$ and a computable function $h : \omega \to \omega$ such that if $\Phi_e^{\emptyset'}$ outputs a finite variable word in which the first $h(e)$ variable kinds occur, then $\Phi_e^{\emptyset'}$ is not extendible into an infinite solution to $c$.
\end{theorem}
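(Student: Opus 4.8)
The plan is to obtain $c$ as a computable satisfying assignment of an infinite CNF formula built to defeat every $\emptyset'$-computable candidate solution, extracting that assignment via the computable Lov\'asz Local Lemma (Theorem~\ref{thm:lll-computable}); this follows the template of Csima--Dzhafarov--Hirschfeldt--Jockusch--Solomon--Westrick~\cite{Csima2018reverse}. I would first isolate a local description of solutions. If $W$ is an infinite ordered variable word extending a finite ordered variable word $\sigma$ in which exactly the variable kinds $x_0,\dots,x_{m-1}$ occur, and $p_k$ is the position of the first occurrence of $x_k$ in $\sigma$ (for $k<m$), then $W(\bar a)=(\sigma\uhr p_k)(\bar a)$ for every $k<m$ and $\bar a\in 2^k$. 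Hence the finite set $V(\sigma):=\bigcup_{k<m}\{(\sigma\uhr p_k)(\bar a):\bar a\in 2^k\}$ is contained in the value set $\{W(\bar b):\bar b\}$, which is monochromatic when $W$ is a solution; so \emph{if $c$ is not constant on $V(\sigma)$, then $\sigma$ has no extension to an infinite solution of $c$}. I would also use a \emph{thinning} operation: given $\sigma$ and $I=\{i_0<\dots<i_{d-1}\}\subseteq\{0,\dots,m-1\}$, identifying in $\sigma$ all variable kinds whose index lies in a common interval $[i_j,i_{j+1})$ yields an ordered variable word $\mu$ with first-positions $p_{i_0}<\dots<p_{i_{d-1}}$ and $V(\mu)\subseteq V(\sigma)$, so forcing $c$ non-constant on $V(\mu)$ again kills $\sigma$.

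Next I would set up the formula. The Boolean variables are $X_\tau$, $\tau\in 2^{<\omega}$, coding $c(\tau)$. Let $(\sigma_{e,s})_{e,s}$ be a uniformly computable array, where $\sigma_{e,s}$ is the output of $\Phi_e$ run with the stage-$s$ approximation to $\emptyset'$ for $s$ steps when this is a finite ordered variable word with at least $h(e)$ variable kinds, and undefined otherwise; if $\Phi_e^{\emptyset'}$ halts with such an output, then $\sigma_{e,s}$ equals it for all large $s$. For each defined $\sigma_{e,s}$, a greedy argument produces a subset $I$ of its variable kinds so that the thinning $\mu=\mu_{e,s}$ has a geometrically spaced skeleton $q_0<\dots<q_{d-1}$ (say $2q_j\le q_{j+1}\le 4q_j$) and still has $d\ge D(h(e))$ variable kinds, where $D$ is nondecreasing with $D\to\infty$; since a skeleton of $\sigma$ can increase as slowly as $p_i=i$, achieving a given $d$ costs a roughly exponential lower bound on $h(e)$, which we are free to impose. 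I then add the two clauses $\bigvee_{\tau\in V(\mu)}[X_\tau=0]$ and $\bigvee_{\tau\in V(\mu)}[X_\tau=1]$, of length $2^d-1$ each; their conjunction forces $c$ non-constant on $V(\mu)$, hence kills $\sigma_{e,s}$, and in particular kills $\Phi_e^{\emptyset'}$ whenever the latter halts admissibly.

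The delicate step, and the one I expect to be the main obstacle, is verifying the multiplicity hypothesis of Theorem~\ref{thm:lll-computable}. Fixing $\alpha\in(0,1)$ and the threshold $N$ it supplies, and using only thinnings with $d\ge d_0$ for a constant $d_0$ with $2^{d_0}-1\ge N$, one arranges all clauses long enough by taking $D(h(e))\ge d_0$. For the multiplicity, a clause of size $2^d-1$ containing $X_\tau$ arises from a thinning $\mu$ with $|\tau|=q_j$ for some $j<d$ and $\tau\in\{(\mu\uhr q_j)(\bar a):\bar a\in 2^j\}$; I must show the number of such $\mu$ is at most $2^{\alpha(2^d-1)}$. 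Geometric spacing makes $|\tau|\le q_{d-1}$ comparable to $2^{O(d)}$ and leaves only $2^{O(d^2)}$ admissible skeletons, which is well within budget; the danger is in counting the interior structure of $\mu$ (which positions carry which variable kind versus a constant value dictated by $\tau$, and the long ``tail'' of positions beyond $|\tau|$ whose constants are not pinned down by $\tau$), where a naive estimate contributes a forbidden factor $2^{|\tau|}$. \textbf{Overcoming this is the heart of the argument}: it is what forces the geometric-spacing restriction and the growth rate of $h$, and it requires either a more careful choice of which local value set to attach to $\mu$, or a structural normalization bounding the number of constant positions that can survive inside the relevant blocks of a thinning of a $\Phi_e$-output.

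Granting the multiplicity bound, Theorem~\ref{thm:lll-computable} yields a computable satisfying assignment, i.e.\ a computable coloring $c:2^{<\omega}\to 2$, which is an instance of $\ovw{2}{2}$. To finish: suppose $\Phi_e^{\emptyset'}$ halts with a finite variable word $W_e$ in which the first $h(e)$ variable kinds occur; we may assume $W_e$ is an ordered finite variable word (otherwise it is trivially not extendible to an ordered solution). Then $W_e=\sigma_{e,s}$ for some $s$, the two clauses attached to the thinning $\mu_{e,s}$ are satisfied by $c$, so $c$ is not constant on $V(\mu_{e,s})\subseteq V(\sigma_{e,s})=V(W_e)$, and therefore $W_e$ is not extendible to an infinite solution of $c$. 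Taking $h$ as chosen above (computable by construction) gives the statement.
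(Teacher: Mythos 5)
Your high-level strategy is the same as the paper's: build a computable infinite CNF whose satisfying assignment (via Rumyantsev--Shen, Theorem~\ref{thm:lll-computable}) defeats each $\Phi_e^{\emptyset'}$. But your clauses are supported on the full value set $V(\mu)$ of a thinned prefix $\mu$ of $\Phi_e$'s output, and you yourself flag that you cannot verify the multiplicity hypothesis: a given $\tau$ with $|\tau|=q_j$ does not pin down which interior positions of $\mu\uhr q_j$ are constants versus variable occurrences, nor any position in $(q_j,q_{d-1}]$, so the number of candidate $\mu$'s whose value set contains $\tau$ is on the order of $(d+1)^{q_{d-1}}$, and with geometric spacing $q_{d-1}\approx 2^{d}$ this is far beyond the permitted $2^{\alpha(2^d-1)}$. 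You write ``Overcoming this is the heart of the argument'' and then proceed by ``granting the multiplicity bound''; that is a genuine gap, and I do not see how the thinning/geometric-spacing device closes it.

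The paper sidesteps the issue with a different clause support. It truncates $\Phi_e^{\emptyset'}[s]$ to a finite ordered variable word $W_{e,s}$ with \emph{exactly} $N+e$ variable kinds (with a fresh variable occurring immediately after $W_{e,s}$), of some length $n_{e,s}$. Writing $L_{e,s}$ for the $2^{N+e}$ leaves (all instantiations of $W_{e,s}$, all of length $n_{e,s}$), it introduces, for each suffix $\tau$ and each $i<2$ with the constraint $n_{e,s}+|\tau|=s$, the clause $\bigvee\{x_{\sigma\tau}=i:\sigma\in L_{e,s}\}$ of length $2^{N+e}$. The pair of clauses for $i=0,1$ forces $c$ to take both values on $\{\sigma\tau:\sigma\in L_{e,s}\}$; since any infinite ordered solution extending $W_e$ would, after instantiating the later variables by $0$'s, produce infinitely many $\tau$ on which $\{\sigma\tau:\sigma\in L_e\}$ must be monochromatic, this kills every extension. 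The multiplicity bound is then trivial: given $\rho$ and $e$, the constraint $n_{e,s}+|\tau|=s$ forces $s=|\rho|$, and then $\sigma=\rho\uhr n_{e,|\rho|}$ and $\tau$ are uniquely determined, so $x_\rho$ occurs in at most $2$ clauses of each length $2^{N+e}$. Nothing like the exponential overcount you ran into arises. So while your ``kill $V(W_e)$ directly'' idea is logically sufficient (indeed, $c$ non-constant on $V(W_e)\subseteq\{W(\bar b)\}$ does rule out extensions), it is exactly the wrong object to encode for the LLL count; the paper's move of fixing the truncation length, using the single level of leaves rather than all prefix levels, and tagging with the stage-length constraint $n_{e,s}+|\tau|=s$ is what makes the multiplicity bookkeeping collapse to a constant per clause length, and you would need to discover that (or an equivalent) to complete your argument.
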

\begin{proof}
Fix $\alpha = 0.5$, and let $N$ be the threshold of Theorem~\ref{thm:lll-computable}. For every index $e$ and stage $s$, we interpret $\Phi^{\emptyset'}_e[s]$ as a finite variable word $W_{e,s}$ with exactly $N+e$ variable kinds, and where a new variable occurs right after $W_{e,s}$. Such a variable word induces a binary tree $T_{e,s}$ with $2^{N+e}$ leaves. Let $L_{e,s}$ be the set of leaves of $T_{e,s}$, that is, the set of all instantiations of the variable word $W_{e,s}$. Moreover, all the leaves of $T_{e,s}$ have the same length $n_{e,s}$.

The idea is the following: since the variable word is ordered and a new variable kind occurs right after $W_{e,s}$, no variable among the first $N+e$ variables can occur after $W_{e,s}$. If $W$ is a solution to $c$ with initial segment $W_e = \lim_s W_{e,s}$ for some color $i$, then $W$ must be homogeneous for $c$ for every instance of the variables, so in particular when setting all the variables after the $N+e$ first ones to 0. Hence, there must be infinitely many strings $\tau$ such that for every $\sigma \in \lim_s L_{e,s}$, $c(\sigma\tau) = i$. By ensuring that for cofinitely many $\tau$, there is some  $\sigma \in L_{e,|\tau|}$ such that $c(\sigma\tau) \neq i$, we force $W_e$ not to be a solution to $c$ for color $i$.

Fix a countable collection of variables $(x_\rho : \rho \in 2^{<\omega})$. Each variable $x_\rho$ corresponds to the color of the string $\rho$.
Given some $s\in\omega,\tau \in 2^{<\omega}$ and some $i < 2$,
if $n_{e,s}+|\tau| = s $, then
let $C_{e,s,\tau,i}$ be the disjunctive $2^{N+e}$-clause
$$
\bigvee \{x_{\sigma\tau} = i : \sigma \in L_{e,s} \}.
$$
And let $C$ be the conjunction
$$
\bigwedge\limits_{n_{e,s}+|\tau| = s }
 \{ C_{e,s,\tau, i} : e \in \omega, \tau \in 2^{<\omega}, i < 2\}.
$$
This infinite CNF formula is clearly computable. Clearly $C_{e,s,\tau,i}$ has length $2^{N+e}$. Note that for every $\rho,e$, there exists at most one $\tau$ such that $(\exists \sigma\in L_{e,|\rho|})[\sigma\tau = \rho]$. Therefore, each variable $x_\rho$ appears in at most $2$ clauses of length $2^{N+e}$, namely, $C_{e, |\rho|,\tau, 0}$ and $C_{e, |\rho|, \tau, 1}$, where $\tau$ is such that $(\exists \sigma\in L_{e,|\rho|})[\sigma\tau = \rho]$. Therefore, this formula satisfies the conditions of Theorem~\ref{thm:lll-computable}, and has a computable assignment $c : 2^{<\omega} \to 2$. By construction, letting $h(e) = N+e+1$, the formula ensures that if $\Phi_e^{\emptyset'}$ outputs a finite variable word in which the first $h(e)$ variables kinds occur, then $\Phi_e^{\emptyset'}$ is not extendible into an infinite solution to $c$.
\end{proof}

\begin{definition}
	A function $f : \omega \to \omega$ is \emph{diagonally non-computable relative to $X$} (or $X$-dnc) if for every $e$,
	$f(e) \neq \Phi_e^X(e)$.
\end{definition}

\begin{corollary}
	There is a computable instance $c$ of $\ovw{2}{2}$ such that every solution is of $\emptyset'$-dnc degree.
\end{corollary}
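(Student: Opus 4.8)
The plan is to read the corollary directly off Theorem~\ref{thm:ovw-delta2}. Fix the computable instance $c$ of $\ovw{2}{2}$ and the computable function $h$ furnished by that theorem, and let $W$ be an arbitrary solution to $c$. It suffices to produce a $W$-computable function that is $\emptyset'$-dnc, since then $W$ --- and hence every solution to $c$ --- has $\emptyset'$-dnc degree.

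First I would use the $s$-$m$-$n$ theorem to fix a computable map $e \mapsto e^{*}$ with the property that the finite variable word read off $\Phi_{e^{*}}^{\emptyset'}$ (under the coding conventions of Theorem~\ref{thm:ovw-delta2}) is precisely the finite variable word coded by $\Phi_e^{\emptyset'}(e)$ whenever that value halts, and is undefined otherwise; concretely, $e^{*}$ indexes the machine that ignores its input, evaluates $\Phi_e^{\emptyset'}(e)$, and reproduces the finite variable word it codes. Then define $g \leq_T W$ by letting $g(e)$ be a code for the shortest initial segment $V_e$ of $W$ in which the first $h(e^{*})$ variable kinds all occur. Since $W$ is an infinite variable word, each $V_e$ exists and is an ordered finite variable word, so $g$ is total and $W$-computable.

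Next I would check that $g$ is $\emptyset'$-dnc, i.e.\ $g(e) \neq \Phi_e^{\emptyset'}(e)$ for every $e$. Suppose otherwise, so $g(e) = \Phi_e^{\emptyset'}(e)$ for some $e$. By the choice of $e^{*}$, the finite variable word read off $\Phi_{e^{*}}^{\emptyset'}$ is then exactly $V_e$, which is a finite variable word in which the first $h(e^{*})$ variable kinds occur; so Theorem~\ref{thm:ovw-delta2} applies and $\Phi_{e^{*}}^{\emptyset'}$ is not extendible into an infinite solution to $c$. But $V_e$ is an initial segment of the infinite solution $W$, hence it \emph{is} extendible into a solution to $c$ --- a contradiction. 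Therefore $g$ is a $W$-computable $\emptyset'$-dnc function, and since $W$ was an arbitrary solution, the corollary follows.

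The only delicate point is the bookkeeping in the second paragraph: Theorem~\ref{thm:ovw-delta2} reads an $\emptyset'$-oracle computation as a code for a finite variable word, whereas the definition of a $\emptyset'$-dnc function requires diagonalizing against the single value $\Phi_e^{\emptyset'}(e)$. Matching these two conventions is exactly what the $s$-$m$-$n$ step accomplishes once the coding of finite variable words by integers is pinned down, so I do not expect any genuine obstacle beyond this routine translation.
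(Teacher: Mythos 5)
Your proof is essentially the same as the paper's: both fix the $c$ and $h$ from Theorem~\ref{thm:ovw-delta2}, both construct a computable index transformation so that the value $\Phi_e^{\emptyset'}(e)$ is reinterpreted as the finite variable word it codes, and both diagonalize by coding the appropriate initial segment of $W$. The one technical point worth flagging: the paper resolves the "routine translation'' you gesture at in your last paragraph by invoking Kleene's fixed-point theorem rather than plain $s$-$m$-$n$, because the coding of finite variable words into integers used at index $g(e)$ is itself taken to be the bijection $\alpha_{g(e)}$ associated to that index (whose domain depends on $h(g(e))$), creating a circularity that $s$-$m$-$n$ alone does not untangle; your version works only if one first fixes a single coding and checks it is compatible with the interpretation built into the proof of Theorem~\ref{thm:ovw-delta2}, which is precisely the bookkeeping you are deferring.
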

\begin{proof}
	Let $c$ and $h$ be as in Theorem~\ref{thm:ovw-delta2}. For every $e$, let $\alpha_e$ be a computable bijection from the finite variable words in which the first $h(e)$ variable kinds occur, to the set of the integers.
	By Kleene's fixpoint theorem, there is a computable function $g : \omega \to \omega$ such that for every $e$, $\Phi^{\emptyset'}_{g(e)} = \alpha^{-1}_{g(e)}(\Phi^{\emptyset'}_e(e))$.	
	
	Let $W$ be a solution to $c$, that is, an infinite variable word. Let $f$ be the $W$-computable function defined by $f(e) = \alpha_{g(e)}(w_e)$, where $w_e$ is the first initial segment of $W$ in which the first $h(g(e))$ variable kinds occur. We claim that $f$ is $\emptyset'$-dnc. Indeed, given $e \in \omega$, $w_e \neq \Phi^{\emptyset'}_{g(e)}$, so
	$$
	f(e) = \alpha_{g(e)}(w_e) \neq \alpha_{g(e)}(\Phi^{\emptyset'}_{g(e)})  = \Phi^{\emptyset'}_e(e)
	$$
	This completes our proof.
\end{proof}

We conclude this section with a small computational observation about $\vw{2}{2}$ based on the syntactical form of the statement.

\begin{definition}
	A function $g : \omega \to \omega$ \emph{dominates}
	$f : \omega \to \omega$ if $(\forall x)f(x) < g(x)$.
	A function $f : \omega \to \omega$ is \emph{hyperimmune}
	if it is not dominated by any computable function.
	A Turing degree is \emph{hyperimmune-free} if it does not contain any hyperimmune function.
\end{definition}

\begin{lemma}[Folklore]\label{lem:pi01-hi-pa}
Let $\Psf$ be a statement of the form $(\forall X)[\Phi(X) \rightarrow (\exists Y)\Psi(X, Y)]$ where $\Phi$ is an arbitrary predicate, and $\Psi$ is a $\Pi^0_2$ predicate. For every computable instance $I$ of $\Psf$, if $I$ has a solution of hyperimmune-free degree, then every PA degree computes a solution to $I$.
\end{lemma}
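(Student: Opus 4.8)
The plan is to exploit the fact that $\Pi^0_2$ formulas are preserved upwards along a suitable absoluteness/compactness argument combined with the hyperimmune-free basis behavior. First I would fix a computable instance $I$ of $\Psf$ together with a solution $S$ of hyperimmune-free degree, so that $\Psi(I,S)$ holds, and write $\Psi(I,Y) \equiv (\forall n)(\exists m)\theta(I,Y,n,m)$ with $\theta$ a $\Sigma^0_0$ predicate. Since $S$ has hyperimmune-free degree and the function $n \mapsto (\text{least } m \text{ such that } \theta(I,S,n,m))$ is $S$-computable, this function is dominated by a computable function $g$. Hence $\Psi(I,S)$ is witnessed in the strong form $(\forall n)(\exists m < g(n))\theta(I,S,n,m)$.

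Next I would turn this into a $\Pi^0_1$ class. Consider the class $\mathcal{C}$ of all $Y \in 2^\omega$ such that $(\forall n)(\exists m < g(n))\theta(I,Y,n,m)$. Because $g$ is computable, $\theta$ is $\Sigma^0_0$, and $I$ is computable, this is a $\Pi^0_1$ class, and it is nonempty since $S \in \mathcal{C}$. Every member $Y$ of $\mathcal{C}$ satisfies $\Psi(I,Y)$, hence is a solution to $I$. By the usual basis theorem for $\Pi^0_1$ classes, every PA degree computes a member of $\mathcal{C}$: indeed any PA degree computes a path through any nonempty $\Pi^0_1$ subtree of $2^{<\omega}$, and $\mathcal{C}$ is exactly the set of paths through such a tree. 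Therefore every PA degree computes a solution to $I$.

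The main obstacle is the passage from the mere existence of a hyperimmune-free solution to a \emph{computably bounded} Skolem function for the $\Pi^0_2$ matrix; this is where the hyperimmune-free hypothesis is essential, and one must be careful that the relevant Skolem function is genuinely computable from the solution (which it is, since $\theta$ is decidable and $I$ is computable), so that the hyperimmune-free property applies. Once the computable bound $g$ is in hand, the rest is the standard observation that a $\Sigma^0_0$ matrix with a computable existential bound defines a $\Pi^0_1$ class and that PA degrees form a basis for nonempty $\Pi^0_1$ classes. Note that the hypothesis $\Phi$ plays no role beyond fixing that $I$ is a genuine instance, since we only use the solution-side predicate $\Psi$.
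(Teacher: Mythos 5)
Your proposal is correct and follows the same route as the paper's proof: compute the $S$-computable Skolem function for the $\Pi^0_2$ matrix, use hyperimmune-freeness to obtain a computable dominating bound $g$, package the resulting boundedly-quantified formula into a nonempty $\Pi^0_1$ class (the paper writes out the corresponding computably bounded tree $T \subseteq \omega^{<\omega}$ explicitly), and invoke that PA degrees form a basis. The only superficial difference is that you present the $\Pi^0_1$ class directly rather than spelling out the tree, and the paper additionally bounds $S(x)$ itself alongside the Skolem value to keep the tree in $\omega^{<\omega}$ computably bounded, which your restriction to $Y\in 2^\omega$ makes unnecessary.
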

\begin{proof}
	Say $\Psi(X, Y) \equiv (\forall x)(\exists y)\Theta(X \uh y, Y \uh y, x, y)$, where $\Theta$ is a decidable predicate.
	Let $I$ be a computable $\Psf$-instance with a solution $S$ of hyperimmune-free degree. 
	Let $h : \omega \to \omega$ be the $S$-computable function such that for every $x$, $\Theta(I, S, x, h(x))$ holds. In particular, there is a computable function $g : \omega \to \omega$ such that $(\forall x)\max (h(x), S(x)) < g(x)$. Let $T \subseteq \omega^{<\omega}$ be the computably bounded tree defined by
	$$
	T = \left\{ \sigma \in \omega^{<\omega} : \begin{array}{l}
 		(\forall x < |\sigma|)\sigma(x) < g(x)) \wedge  \\
 		(\forall x < |\sigma|)[g(x) < |\sigma| \rightarrow (\exists y < |\sigma|)\Theta(I \uh y, \sigma \uh y, x, y)]
 \end{array} \right\}
	$$
	In particular, $S \in [T]$, so the tree is infinite. Moreover,
	any $R \in [T]$ is a solution to $I$, and any PA degree computes a member
	of $[T]$. This completes the proof.
\end{proof}

\begin{corollary}
	There is a computable instance of $\vw{2}{2}$ such that
	every solution is of hyperimmune degree.
\end{corollary}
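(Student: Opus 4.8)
The plan is to combine the preceding Corollary—there is a computable instance $c$ of $\ovw{2}{2}$ every solution of which is of $\emptyset'$-dnc degree—with Lemma~\ref{lem:pi01-hi-pa}, via a contrapositive argument. First I would observe that the same instance $c$ is an instance of $\vw{2}{2}$: a solution to $\ovw{2}{2}$ for $c$ is in particular a solution to $\vw{2}{2}$ for $c$, since ordered variable words are variable words and the monochromaticity conditions coincide. So it suffices to show that this $c$, viewed as a $\vw{2}{2}$-instance, has no solution of hyperimmune-free degree.

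The key step is to check the hypothesis of Lemma~\ref{lem:pi01-hi-pa} applies to $\vw{2}{2}$, i.e.\ that ``$W$ is an infinite variable word such that $\{W(\bar a):\bar a\in A^{<\omega}\}$ is monochromatic'' can be written in the required $\Pi^0_2$ form relative to $(c,W)$. Indeed, ``$W$ is an infinite variable word'' is $\Pi^0_2$ (every variable occurs, finitely often, in the right order), and monochromaticity says there is a color $i<2$ with $(\forall \bar a)\,c(W(\bar a))=i$, which after absorbing the two choices of $i$ is again $\Pi^0_2$; the predicate $\Theta$ is decidable from $c$ and the relevant finite initial segment of $W$. Hence by Lemma~\ref{lem:pi01-hi-pa}, if $c$ had a $\vw{2}{2}$-solution of hyperimmune-free degree, then every PA degree would compute a $\vw{2}{2}$-solution to $c$, hence (taking the jump, exactly as noted in the introduction, since the jump of a $\vw{2}{2}$-solution computes an $\ovw{2}{2}$-solution) every PA-degree-jump would compute an $\ovw{2}{2}$-solution to $c$.

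Now I would derive a contradiction: there is a PA degree $\dbf$ which is low, so $\dbf' \equiv_T \emptyset'$; then $\dbf$ would compute a $\vw{2}{2}$-solution whose jump, computable from $\dbf'\equiv_T\emptyset'$, is an $\ovw{2}{2}$-solution to $c$ of degree $\leq\emptyset'$. But such a solution must be of $\emptyset'$-dnc degree by the preceding Corollary, and $\emptyset'$ is not of $\emptyset'$-dnc degree, a contradiction. Therefore $c$, as a $\vw{2}{2}$-instance, has no hyperimmune-free-degree solution; equivalently every solution of $c$ is of hyperimmune degree. The main obstacle is making sure the two-step ``jump of a $\vw{2}{2}$-solution is an $\ovw{2}{2}$-solution'' reduction is stated cleanly enough to feed through Lemma~\ref{lem:pi01-hi-pa} together with the existence of a low PA degree; alternatively one can avoid the jump bookkeeping by noting directly that, since every PA degree computing a $\vw{2}{2}$-solution to $c$ would in particular yield a $\vw{2}{2}$-solution of PA, hence low-ish, degree, this already contradicts the absence of $\Delta^0_2$ solutions established by Miller and Solomon.
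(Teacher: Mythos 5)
Your proposal is correct and follows essentially the same route as the paper: verify $\vw{2}{2}$ has the $\Pi^0_2$ form required by Lemma~\ref{lem:pi01-hi-pa}, take the hard instance $c$, fix a low PA degree $\dbf$, and conclude via the contrapositive of that lemma. The paper simply cites the fact that $c$ has no low $\vw{2}{2}$-solution (due to Miller and Solomon, and recalled in the introduction), whereas you spell out the underlying jump argument; the difference is expository rather than mathematical.
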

\begin{proof}
	First, note that the statement $\vw{2}{2}$ is of the form of Lemma~\ref{lem:pi01-hi-pa}.
	Let $c : 2^{<\omega} \to 2$ be the computable
	instance of $\vw{2}{2}$ with no low solution constructed by Miller and Solomon~\cite{Miller2004Effectiveness} or by Theorem~\ref{thm:ovw-delta2}. Letting $\dbf$ be a low PA degree, $\dbf$ computes no solution to~$c$, hence by Lemma~\ref{lem:pi01-hi-pa}, every solution to~$c$ is of hyperimmune degree.
\end{proof}

It is still unknown whether there is a computable instance of $\ovw{2}{2}$ such that every solution is PA over $\emptyset'$, or even just computes $\emptyset'$. In particular the following questions remain open:

\begin{question}
	Does $\vw{2}{2}$ or $\ovw{2}{2}$ imply $\aca$ over $\rca$?
\end{question}

\begin{question}
	Is there a computable instance of $\vw{2}{2}$ or $\ovw{2}{2}$ such that the measure of oracles computing a solution to it is null?
\end{question} 


\bibliographystyle{plain}
\bibliography{bibliography}

\begin{thebibliography}{10}

\bibitem{Carlson1984dual}
Timothy~J. Carlson and Stephen~G. Simpson.
\newblock A dual form of {R}amsey's theorem.
\newblock {\em Adv. in Math.}, 53(3):265--290, 1984.

\bibitem{Csima2009strength}
Barbara~F. Csima and Joseph~R. Mileti.
\newblock {The strength of the rainbow {R}amsey theorem}.
\newblock {\em Journal of Symbolic Logic}, 74(04):1310--1324, 2009.

\bibitem{Dzhafarov2017Effectiveness}
Reed Solomon Linda Brown~Westrick Damir D.~Dzhafarov, Stephen~Flood.
\newblock Effectiveness for the dual ramsey theorem.
\newblock To appear. Available at
  \url{http://www.math.uconn.edu/~damir/papers/dualRT.pdf}, 2017.

\bibitem{Friedman2000Issues}
Harvey Friedman and Stephen~G. Simpson.
\newblock Issues and problems in reverse mathematics.
\newblock In {\em Computability theory and its applications ({B}oulder, {CO},
  1999)}, volume 257 of {\em Contemp. Math.}, pages 127--144. Amer. Math. Soc.,
  Providence, RI, 2000.

\bibitem{Jockusch197201}
Carl~G. Jockusch and Robert~I. Soare.
\newblock {$\Pi^0_1$} classes and degrees of theories.
\newblock {\em Transactions of the American Mathematical Society}, 173:33--56,
  1972.

\bibitem{Miller2004Effectiveness}
Joseph~S. Miller and Reed Solomon.
\newblock Effectiveness for infinite variable words and the dual {R}amsey
  theorem.
\newblock {\em Arch. Math. Logic}, 43(4):543--555, 2004.

\bibitem{Montalban2011Open}
Antonio Montalb{\'a}n.
\newblock Open questions in reverse mathematics.
\newblock {\em Bulletin of Symbolic Logic}, 17(03):431--454, 2011.

\bibitem{Rumyantsev2014Probabilistic}
Andrei Rumyantsev and Alexander Shen.
\newblock Probabilistic constructions of computable objects and a computable
  version of lov{\'a}sz local lemma.
\newblock {\em Fundamenta Informaticae}, 132(1):1--14, 2014.

\bibitem{Simpson2009Subsystems}
Stephen~G. Simpson.
\newblock {\em {Subsystems of Second Order Arithmetic}}.
\newblock Cambridge University Press, 2009.

\bibitem{Slaman1997note}
Theodore~A. Slaman.
\newblock A note on dual ramsey theorem.
\newblock Unpublished, January 1997.

\bibitem{Towsner2012simple}
Henry Towsner.
\newblock A simple proof and some difficult examples for {H}indman's theorem.
\newblock {\em Notre Dame J. Form. Log.}, 53(1):53--65, 2012.

\end{thebibliography}

\appendix


\end{document}